\documentclass[reqno]{amsart}
\usepackage{amsmath, amsthm, amscd, amssymb, amsfonts, amsbsy}
\usepackage{mathrsfs}
\usepackage{pxfonts}
\usepackage{bbm}
\usepackage{todonotes}
\usepackage{marginnote}

\theoremstyle{plain}
\newtheorem{theorem}[equation]{Theorem}
\newtheorem{lemma}[equation]{Lemma}

\theoremstyle{definition}

\theoremstyle{remark}

\numberwithin{equation}{section}

\providecommand{\abs}[1]{\lvert#1\rvert}
\providecommand{\Abs}[1]{\left\lvert#1\right\rvert}

\providecommand{\norm}[1]{\lVert#1\rVert}

\renewcommand{\vec}[1]{\boldsymbol{#1}}

\begin{document}
\title[Kelly's conjectures for odd electorates]
{Kelly's conjectures on Condorcet winner probabilities for odd electorates}

\author[S. Kim]{Seick Kim}
\address[S. Kim]{Department of Mathematics, Yonsei University, 50 Yonsei-Ro, Seodaemun-gu, Seoul 03722, Republic of Korea}
\email{kimseick@yonsei.ac.kr}
\thanks{S. Kim is supported by the National Research Foundation of Korea (NRF) under agreement NRF-2022R1A2C1003322.
S. Kim is affiliated with the Intelligent Computational Science Institute (IN2CSI), Yonsei University, 50 Yonsei-Ro, Seodaemun-gu, Seoul 03722, Republic of Korea.}

\author[H. Shin]{Hocheol Shin}
\address[H. Shin]{Interdisciplinary Program in Philosophy, Politics and Economics, Seoul National University,  1 Gwanak-ro, Gwanak-gu, Seoul 08826, Republic of Korea}
\email{hshin@snu.ac.kr}

\subjclass[2020]{Primary 91B12; Secondary 60C05, 91B14}

\keywords{Condorcet winner, Kelly's conjectures, impartial culture, odd electorates, strict monotonicity, Bernoulli sums}

\begin{abstract}
Let $C(m,n)$ denote the probability that, among $n$ independent uniformly random permutations of $m$ labels, there exists a label that precedes every other label in a strict majority of the permutations.
We prove that $C(m,n)>C(m,n+2)$ for $m \ge 3$ and odd $n \ge 1$, and that $C(m,n)>C(m+1,n)$ for $m \ge 2$ and odd $n \ge 3$.
These results establish Kelly's two monotonicity conjectures for odd electorates.
The proofs use integral representations involving probabilities associated with sums of independent Bernoulli random variables.
\end{abstract}

\maketitle

\section{Introduction and main results}

Let $n$ voters independently choose uniformly random permutations of the labels $\{1,\ldots,m\}$.
This is the impartial culture model, in which each of the $m!$ rankings is equally likely.
A label $a\in\{1,\ldots,m\}$ is called a majority winner (or Condorcet winner) if, for every $b\ne a$, more than half of the permutations place $a$ before $b$.
There is at most one majority winner.
Let $C(m,n)$ denote the probability that one exists.

A majority winner need not exist.
For example, consider three labels $a, b, c$ and three voters with rankings $abc$, $bca$, and $cab$, written from most to least preferred.
These give a two-to-one majority for $a$ over $b$, $b$ over $c$, and $c$ over $a$.
Thus, there is no majority winner.
This is the classical Condorcet paradox.

Black \cite{Black1948} suggested that, for a fixed number of voters, majority winners become less frequent
as the number of alternatives increases.
Kelly \cite{Kelly1974, Kelly1986} studied how the probability of a Condorcet winner depends on the numbers of voters and
alternatives and formulated two monotonicity conjectures.
His definition allows ties in pairwise comparisons, but coincides with ours when $n$ is odd.
Fishburn, Gehrlein, and Maskin \cite{FGM1979} subsequently investigated these conjectures and established several special cases.
For further results on Condorcet winner probabilities, including exact calculations, partial results, and
asymptotic estimates, see \cite{DP1970, May1971, Fishburn1973, GF1976, KR2005, Gehrlein2006, Sauermann2022, CDNPR2025, Pittel2025} and the references therein.

In this paper, we prove both conjectures for odd numbers of voters.
Specifically, we show that the probability of a Condorcet winner strictly decreases upon adding either two voters or one alternative.

Note that $C(2,n)=1$ for every odd $n$, since a majority contest between two labels cannot end in a tie.
Also, $C(m,1)=1$, since a single voter always has a top-ranked label.
These elementary cases explain the lower bounds in the following theorems.

\begin{theorem}\label{thm01}
For every $m\ge3$ and every odd $n\ge1$,
\[
C(m,n)>C(m,n+2).
\]
\end{theorem}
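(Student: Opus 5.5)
Since there is at most one majority winner, $C(m,n)=m\,P_n$, where $P_n$ is the probability that label $1$ is the majority winner. So the task is to compare $P_n$ with $P_{n+2}$. I would start from an integral representation that turns the dependent pairwise contests into independent Bernoulli trials.

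\textbf{Integral representation.} A uniform permutation can be generated by giving each voter $k$ i.i.d.\ uniform variables $U_{k,1},\dots,U_{k,m}$ and ranking labels in increasing order. Condition on voter $k$'s value $x_k=U_{k,1}$ for label $1$. Then the events $\{U_{k,b}>x_k\}$ for $b=2,\dots,m$ are independent across $b$, with probability $1-x_k$ each.

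Let $S_n(\vec x)$ denote the number of successes among $n$ independent Bernoulli variables with parameters $1-x_1,\dots,1-x_n$. Because different labels $b$ use independent $U$'s,
\[
P_n=\int_{[0,1]^n}\Pr\bigl(S_n(\vec x)>n/2\bigr)^{m-1}\,d\vec x .
\]
Write $q_n(\vec x)=\Pr(S_n>n/2)$, and write $n=2r+1$.

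\textbf{Reducing to a comparison of $q$'s.} Pass from $n$ to $n+2$ by adding two coordinates $y,z$. One has $S_{n+2}=S_n+\xi_y+\xi_z$, and the threshold rises from $r+1$ to $r+2$. Therefore
\[
q_{n+2}=q_n-\Pr(S_n=r+1)\,yz+\Pr(S_n=r)(1-y)(1-z).
\]
Averaging over $y,z$ uniform, the correction has mean $\tfrac14\bigl(\Pr(S_n=r)-\Pr(S_n=r+1)\bigr)$. Under the reflection $\vec x\mapsto 1-\vec x$ this quantity is odd, so $\mathbb E\,q_{n+2}=\mathbb E\,q_n$ (both equal $1/2$, as they should).

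The function $t\mapsto t^{m-1}$ is strictly convex for $m\ge3$. Hence we must show that adding the two voters makes $q$ \emph{less dispersed}, in the sense that
\[
\mathbb E\,q_{n+2}^{\,m-1}<\mathbb E\,q_n^{\,m-1}.
\]

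\textbf{Key step: concave-order comparison.} I would aim to show that $q_{n+2}$ is a mean-preserving contraction of $q_n$. Concretely, I want a coupling in which
\[
\mathbb E\bigl[q_{n+2}\mid \vec x\bigr]=q_n+\tfrac14\bigl(\Pr(S_n=r)-\Pr(S_n=r+1)\bigr)=:q_n+\delta(\vec x),
\]
with $\delta$ of the opposite sign to $q_n-\tfrac12$, i.e.\ the correction pulls toward $1/2$. A direct Jensen argument fails here because of the drift term $\delta$, so the plan is to expand the power directly:
\[
\mathbb E\bigl[q_{n+2}^{m-1}-q_n^{m-1}\bigr]
=\sum_{j\ge1}\binom{m-1}{j}\,\mathbb E\bigl[q_n^{m-1-j}D^j\bigr],
\]
where $D=-A\,yz+B(1-y)(1-z)$ with $A=\Pr(S_n=r+1)$ and $B=\Pr(S_n=r)$.

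Then I would symmetrize each term via $\vec x\mapsto1-\vec x$. Under this map $q_n\mapsto 1-q_n$ and $A\leftrightarrow B$; likewise $(y,z)\mapsto(1-y,1-z)$ makes $D\mapsto -D$.

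\textbf{Main obstacle.} The hardest step is the sign of the linear term $\mathbb E[q_n^{m-2}(B-A)]$ combined with the higher-order terms. For the linear term, I would try to show that $B-A$ and $q_n$ are negatively associated, using the monotone likelihood ratio of Poisson-binomial distributions: increasing the success probabilities raises $q_n$ and also shifts mass from $r$ to $r+1$. One way to organize this is to write the monotone coupling along coordinate derivatives, $\partial_{x_i}q_n=-\Pr(S_{n-1}^{(i)}=r)$. The quadratic term $\mathbb E[q_n^{m-3}D^2]$ is positive and must be dominated. I expect to need the identity $B-A=\sum_i\partial_{x_i}(\cdots)$ together with integration by parts in $\vec x$ to show the total is strictly negative for all $m\ge3$. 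Establishing that domination uniformly in $m$ is where I expect the real work to be.
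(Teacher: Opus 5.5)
Your representation and the two-voter identity $q_{n+2}=q_n-Ayz+B(1-y)(1-z)$ are correct. However, the proposal stops exactly where the theorem begins. The inequality $\mathbb E[q_{n+2}^{m-1}]<\mathbb E[q_n^{m-1}]$ is left as ``where I expect the real work to be,'' and the heuristics you offer for it do not hold as stated.

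\textbf{The drift does not pull toward $1/2$ pointwise.} Take $n=3$ with success probabilities $(0.99,0.3,0.3)$, i.e.\ $\vec x=(0.01,0.7,0.7)$. Then $q_3\approx0.506>1/2$, but $B-A=\Pr(S=1)-\Pr(S=2)\approx0.489-0.417>0$.

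\textbf{The Jensen framing is backwards.} Conditional Jensen over $(y,z)$ gives $\mathbb E[q_{n+2}^{m-1}\mid\vec x]\ge(q_n+\delta)^{m-1}$. So the randomness of the added voters works \emph{against} you, and only the drift can help; there is no mean-preserving contraction at fixed $\vec x$.

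\textbf{Harris/FKG does not settle the linear term.} Writing $p_i=1-x_i$, the function $B-A$ is not coordinatewise monotone when $n\ge3$: $\partial_{p_i}(B-A)=\Pr(S^{(i)}=r-1)-2\Pr(S^{(i)}=r)+\Pr(S^{(i)}=r+1)$ is positive when all $p_i$ are small. Centered at $q_n$, you are therefore left with a linear term whose integrand changes sign, a positive quadratic term, and higher-order terms of indefinite sign. The plan offers no mechanism to compare them.

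\textbf{The missing idea.} The paper centers the expansion at an \emph{even} collection of $N=n+1$ variables instead of at $q_n$. Deleting variable $i$ gives $q_n=w+\tau(1-\pi_i)$, which does not depend on $u_i$ and can therefore be averaged over $i$. Adding a variable gives $q_{n+2}=w+\tau v$. Since $\sum_i\pi_i=N/2$, the $k=0$ and $k=1$ terms of the binomial expansion in powers of $\tau$ cancel \emph{identically}. This leaves $\sum_{k\ge2}\binom{m-1}{k}\Delta_k$, with no linear term to dominate.

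Even then, the bracket $\frac1N\sum_i(1-\pi_i)^k-\frac1{k+1}$ is not pointwise positive: on the diagonal $u_1=\cdots=u_N$ it equals $2^{-k}-\frac1{k+1}<0$. So the paper still needs a global argument, built from four ingredients:
\begin{enumerate}
\item log-odds coordinates, in which $\pi_i$ is independent of $x$ and $d\vec u=\tau^2J\,dx\,d\lambda_1\cdots d\lambda_{N-1}$;
\item the Stein-equation identity $\partial_{\lambda_i}[Jb(\pi_i)]=J\bigl((1-\pi_i)^k-\frac1{k+1}\bigr)$, which puts the bracket in divergence form;
\item integration by parts onto $I_k(\vec\lambda)=\int\tau^{k+2}w^{m-1-k}\,dx$;
\item the fact that $I_k$ increases in directions co-monotone with $\vec\lambda$. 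This is proved by showing that, along level sets of $w$, the tie probability $\tau$ increases (via log-concavity of Poisson--binomial probabilities) and $\partial_xw$ decreases.
\end{enumerate}
Your plan contains none of these ingredients or a substitute for them.
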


\begin{theorem}\label{thm02}
For every $m\ge2$ and every odd $n\ge3$,
\[
C(m,n)>C(m+1,n).
\]
\end{theorem}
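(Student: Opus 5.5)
The plan is to reduce Theorem~\ref{thm02} to an inequality about moments of one random variable, namely a majority probability for independent Bernoulli variables, and then compare its law with the uniform law on $[0,1]$.

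\textbf{Integral representation.} Realize each voter's ranking by attaching i.i.d.\ uniform variables $U_{v,a}\in[0,1]$ to voter $v$ and label $a$, and ranking labels in increasing order of $U_{v,a}$. Condition on the values $x=(x_1,\dots,x_n)$ that label $1$ receives. For each other label $b$, the indicators $\mathbbm{1}\{U_{v,b}>x_v\}$ are independent Bernoulli$(1-x_v)$ in $v$. Moreover, the families for different $b$ are conditionally independent given $x$. Set
\[
q(x)=\Pr\Bigl(\textstyle\sum_{v=1}^n \xi_v>\tfrac n2\Bigr),\qquad \xi_v\sim\mathrm{Bernoulli}(1-x_v)\ \text{independent}.
\]
Then $\Pr(1\text{ is the majority winner}\mid x)=q(x)^{m-1}$. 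Since at most one majority winner exists, this gives
\[
C(m,n)=m\int_{[0,1]^n} q(x)^{m-1}\,dx=m\,\mathbb E[Q^{m-1}],
\]
where $Q=q(X)$ and $X$ is uniform on $[0,1]^n$. Theorem~\ref{thm02} then becomes $\mathbb E\bigl[Q^{m-1}\bigl(m-(m+1)Q\bigr)\bigr]>0$.

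\textbf{Reduction to a comparison with the uniform law.} Two facts calibrate the problem. First, because $n$ is odd, the substitution $x\mapsto 1-x$ gives $q(1-x)=1-q(x)$, so $Q$ and $1-Q$ have the same law. Second, if $Q$ were uniform, as it is when $n=1$, equality would hold, since $m\cdot\frac1m=(m+1)\cdot\frac1{m+1}$. I will write $\bar F(t)=\Pr(Q>t)$ and $G(t)=\bar F(t)-(1-t)$, and integrate by parts to get
\[
\mathbb E\bigl[Q^{m-1}(m-(m+1)Q)\bigr]=m\int_0^1 t^{m-2}\bigl((m-1)-(m+1)t\bigr)\,G(t)\,dt.
\]
The symmetry gives $G(1-t)=-G(t)$. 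Folding the integral at $t=\tfrac12$ therefore reduces the claim to $\int_0^{1/2}G(t)\bigl(w(t)-w(1-t)\bigr)\,dt>0$, where $w(t)=t^{m-2}((m-1)-(m+1)t)$. Since $w(t)-w(1-t)$ changes sign on $(0,\tfrac12)$ for larger $m$, a pure sign pattern for $G$ will probably not be enough. I would therefore aim for a stronger structural statement. The candidate I would try first is that the density of $Q$, or equivalently $\bar F'$, is U-shaped relative to $1$. That is, $Q$ is more spread toward $\{0,1\}$ than the uniform law, which is plausible because majority amplifies bias. Concretely, this would mean $G\le 0$ on $(0,\tfrac12)$ with a single-crossing property of $\bar F'+1$. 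That property would make the folded integral positive via a Chebyshev-type rearrangement argument against $w$.

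\textbf{Main obstacle.} The hard step is proving this distributional comparison for every odd $n\ge3$. Strictness should come from the case $n\ge3$ being genuinely non-uniform. To attack it, I would compute $\Pr(Q>t)$ through the structure of $q$, which is a multilinear, coordinatewise decreasing symmetric polynomial in $x$. I would first try an induction $n\to n+2$, adding two voters and using the recursion for Bernoulli-sum tails at the threshold $\tfrac n2$. If that fails, I would fall back on a direct check that the moment sequence $\mathbb E[Q^k]$ is log-convex relative to $1/(k+1)$. That is exactly the form $k\,\mathbb E[Q^{k-1}]>(k+1)\mathbb E[Q^k]$ needed for all $k=m\ge2$ simultaneously.
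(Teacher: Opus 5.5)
Your setup is sound: $C(m,n)=m\,\mathbb E[Q^{m-1}]$, the formula in terms of $G$, and the folding at $t=\tfrac12$ are all correct. The gap is that the structural property you propose to prove has the wrong sign and is false. If $G\le 0$ on $(0,\tfrac12)$, the same folding gives
\[
\mathbb E[Q^2]-\tfrac13=\int_0^1 2t\,G(t)\,dt=\int_0^{1/2}(4t-2)\,G(t)\,dt\ge 0 .
\]
That is, $C(3,n)=3\,\mathbb E[Q^2]\ge 1=C(2,n)$, which contradicts the theorem at $m=2$. Concretely, for $n=3$ one computes $\mathbb E[Q^2]=17/54<\tfrac13$, equivalently $C(3,3)=17/18$.

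The heuristic ``majority amplifies bias'' misleads you here. The voters' biases are independent and largely average out, so $Q$ is \emph{more} concentrated near $\tfrac12$ than the uniform law. For large $n$, $Q\approx\Phi(Z/\sqrt2)$ rather than $\Phi(Z)$, with $Z$ standard normal. What suffices, and is true, is the reverse statement: the symmetric density $f$ of $Q$ is strictly increasing on $(0,\tfrac12)$. With $g(t)=t^m(1-t)$, which vanishes at $0$ and $1$, one more integration by parts then gives
\[
\mathbb E\bigl[Q^{m-1}\bigl(m-(m+1)Q\bigr)\bigr]=\mathbb E[g'(Q)]=\int_0^{1/2}\bigl(g(1-t)-g(t)\bigr)f'(t)\,dt>0 ,
\]
since $g(1-t)>g(t)$ on $(0,\tfrac12)$. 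No Chebyshev or single-crossing argument against your sign-changing weight is needed.

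The more serious gap is that this distributional fact is the entire content of the theorem, and you give no mechanism for proving it. The induction $n\to n+2$ is not specified. The fallback ``check'' $k\,\mathbb E[Q^{k-1}]>(k+1)\,\mathbb E[Q^k]$ for all $k\ge2$ is just $C(k,n)>C(k+1,n)$ restated, so it is circular.

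The missing idea, as in the paper, is to integrate out one voter. Set $N=n-1$ and fix the other parameters $\vec u\in(0,1)^N$. Then $W_n=w+\tau v$ with $v$ uniform, so the law of $Q$ is a mixture of uniform laws on the intervals $(w,w+\tau)$ with weight $d\vec u/\tau$. This weight is finite by the bound $\int d\vec u/\tau\le 3^{N/2}$. This gives $C(m,n)-C(m+1,n)=\int\bigl(g(w+\tau)-g(w)\bigr)\tau^{-1}\,d\vec u$. The reflection $\vec u\mapsto\vec 1-\vec u$ then replaces $w+\tau$ by $1-w$.

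In log-odds coordinates, the push-forward of $d\vec u/\tau$ under $w$ has density $R(y)=\int J(\vec\lambda)\rho_{\vec\lambda}(y)\,d\lambda_1\cdots d\lambda_{N-1}$. Here $\rho_{\vec\lambda}=\tau/s$ is strictly decreasing in $y$, because $s/\tau=\sum_i u_i(1-\pi_i)$ and the $\pi_i$ do not depend on $x$. In your language this is exactly $f'(y)=R(y)-R(1-y)>0$ for $y<\tfrac12$. That is the unimodality of $f$, the opposite of the U-shape you proposed.
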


A common idea in both proofs is to work with an even number of Bernoulli variables.
By adding or removing one variable, we express the corresponding majority probabilities in terms of
quantities associated with this even collection.
The key is the tie event, which occurs at the majority threshold only for an even number of variables.
These identities reduce the desired comparisons to integral positivity problems, which are then resolved by different arguments.
This also explains why our approach is restricted to odd $n$.

We collect the preliminary results in Section~\ref{sec2} and prove Theorems~\ref{thm02} and~\ref{thm01} in Sections~\ref{sec3} and~\ref{sec4}, respectively.

\section{Preliminaries}		\label{sec2}
\subsection{Integral representation of $C(m,n)$}
For  $n\ge 1$ and $\vec u=(u_1,\ldots,u_n)\in(0,1)^n$, let $X_1, \ldots, X_n$ be independent Bernoulli random variables with parameters $u_1, \ldots, u_n$, respectively.
Define
\[
W_n(\vec u) =\mathbb{P}\left(\sum_{i=1}^n X_i>\frac n2\right)
=\sum_{\substack{A\subset\{1,\ldots,n\}\\ \abs{A}>n/2}} \prod_{i\in A}u_i\prod_{i\notin A}(1-u_i).
\]
In particular, $W_n$ is a polynomial in $u_1,\ldots, u_n$.

The following integral representation appears in \cite[Theorem~5]{MB2002}; see also \cite{Sauermann2022, Pittel2025}.
We include a proof for completeness.

\begin{lemma}
For odd $n$, we have
\begin{equation}	\label{eq1902sat}
C(m,n)=m\int_{(0,1)^n}W_n(\vec u)^{m-1}\,d\vec u.
\end{equation}
\end{lemma}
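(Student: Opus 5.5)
We realize each uniformly random permutation through i.i.d.\ uniform scores. For voter $i$ and label $a$, let $U_{i,a}$ be independent uniform on $(0,1)$. Voter $i$ ranks the labels in decreasing order of $U_{i,a}$, with smaller rank meaning more preferred. Ties have probability zero, and by exchangeability the induced permutations are independent and uniform. So $C(m,n)$ is the probability that some label is a majority winner for these random rankings.

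Since there is at most one majority winner, the events $\{a \text{ is the majority winner}\}$ are disjoint. By symmetry among labels,
\[
C(m,n)=m\,\mathbb{P}(\text{label }1\text{ is the majority winner}).
\]
We compute this by conditioning on $\vec u=(U_{1,1},\ldots,U_{n,1})$, the scores of label $1$. Given $\vec u$, consider each other label $b\in\{2,\ldots,m\}$. Voter $i$ places $1$ before $b$ exactly when $U_{i,b}<u_i$, which happens with probability $u_i$. These indicators are independent across $i$, so conditionally they are independent Bernoulli variables with parameters $u_1,\ldots,u_n$.

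Label $1$ beats $b$ in a strict majority with conditional probability $W_n(\vec u)$. Because $n$ is odd, a tie at $n/2$ cannot occur, so ``more than half'' is the only criterion needed and matches the definition of $W_n$. The crucial point is that, given $\vec u$, the events for different $b$ depend on the disjoint independent families $(U_{i,b})_{i}$. They are therefore conditionally independent, and the conditional probability that label $1$ beats every $b\neq 1$ equals $W_n(\vec u)^{m-1}$. Integrating over $\vec u$, which is uniform on $(0,1)^n$, gives \eqref{eq1902sat}.

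The only step needing care is the conditional independence. In the permutation description, the comparisons ``$1$ versus $b$'' for different $b$ are not independent. Introducing the scores removes this dependence once label $1$'s scores are fixed. Measurability and Fubini are routine because $W_n$ is a bounded polynomial. The oddness of $n$ is used only to identify the majority-winner event with the strict inequality defining $W_n$.
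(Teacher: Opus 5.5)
Your proof is correct and is essentially the paper's argument: you use independent uniform scores and condition on label $1$'s scores, so that the comparisons with each rival are Bernoulli trials with parameters $u_1,\ldots,u_n$ that are conditionally independent across rivals, and then you integrate and multiply by $m$ using disjointness and symmetry. One minor remark: the paper defines a majority winner by strict majority, so the identity does not actually need $n$ odd; oddness only matters for matching Kelly's tie-allowing definition, so your last sentence overstates its role, though this does not affect the argument.
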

\begin{proof}
For each of the $n$ permutations, assign a uniform score in $(0,1)$ to each of the $m$ labels, taking all scores to be mutually independent.
Order the labels by decreasing score in each permutation.
Since ties have probability zero, each ordering is a uniform random permutation of $\{1,\ldots,m\}$, and the resulting $n$ permutations are independent.

Fix a label $a$ and condition on its $n$ scores $\vec u=(u_1,\ldots,u_n)$.
For any rival $b$, in the $i$-th permutation, $a$ is placed before $b$ precisely when the score of $b$ is less than $u_i$.
Since that score is uniform on $(0,1)$, this occurs with probability $u_i$.
Thus, over the $n$ permutations, the comparisons of $a$ with $b$ are independent
Bernoulli trials with parameters $u_1,\ldots, u_n$.
Hence the conditional probability that $a$ defeats $b$ by a majority is $W_n(\vec u)$.

The scores of distinct rivals are independent, so conditional on $\vec u$, the events that $a$ defeats the different rivals are independent.
Therefore, the conditional probability that $a$ is a majority winner is
\[
W_n(\vec u)^{m-1}.
\]

Since the $n$ scores of $a$ are independent and uniform on $(0,1)$,
integrating over $\vec u\in(0,1)^n$ gives
\[
\mathbb P(a\text{ is a majority winner}) = \int_{(0,1)^n} W_n(\vec u)^{m-1}\,d\vec u.
\]
Finally, the $m$ labels are symmetric, and the events that they are majority winners are pairwise disjoint.
Summing over the $m$ labels yields \eqref{eq1902sat}.
\end{proof}

\subsection{Identities for majority and tie probabilities}
For the remainder of this section, let $N \ge 2$ be even and $\vec u=(u_1,\ldots,u_N)\in(0,1)^N$.
Let $X_1,\ldots,X_N$ be independent Bernoulli random variables satisfying $\mathbb P(X_i=1)=u_i$, and set
\[
\textstyle S=\sum_{i=1}^N X_i.
\]
Define
\[
w(\vec u)=\mathbb P(S>N/2),\qquad \tau(\vec u)=\mathbb P(S=N/2),\qquad \pi_i(\vec u)=\mathbb P(X_i=1\mid S=N/2).
\]
Here $w$ and $\tau$ are the probabilities of a strict majority and a tie, respectively, while $\pi_i$ is the conditional
probability that $X_i=1$ given a tie.
All these probabilities depend on $\vec u$, as indicated by the notation.

Since $0<u_i<1$, every configuration of the $X_i$'s has positive probability.
Consequently, $0<w<1$ and $\tau>0$.
Moreover, both $X_i=0$ and $X_i=1$ occur with positive probability on the event $S=N/2$, so $0<\pi_i<1$ for every $i=1,\ldots, N$.

The following identities express the majority probabilities obtained by adding or removing one variable in terms of the same majority and tie probabilities.
They provide the starting point for our monotonicity arguments.

\begin{lemma}
Suppose $N\ge2$ is even.
For every $\vec u\in(0,1)^N$ and $v\in(0,1)$, we have
\begin{align}
W_{N+1}(u_1,\ldots,u_N,v) &=w(\vec u)+\tau(\vec u)v,	\label{eq1921sat}\\
W_{N-1}(u_1,\ldots,\widehat{u_i},\ldots,u_N) &=w(\vec u)+\tau(\vec u)(1-\pi_i(\vec u)), \quad i=1,\ldots,N,	\label{eq1922sat}\\
\sum_{i=1}^N\pi_i(\vec u) &=N/2.	\label{eq1944sat}
\end{align}
Here, $\widehat{u_i}$ indicates that the $i$-th argument is omitted. 
Moreover,
\begin{equation}	\label{eq1947sat}
\tau(\vec 1-\vec u)=\tau(\vec u),\qquad w(\vec 1-\vec u)=1-w(\vec u)-\tau(\vec u),
\end{equation}
where $\vec 1-\vec u=(1-u_1,\ldots,1-u_N)$.
\end{lemma}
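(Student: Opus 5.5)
Each identity comes from conditioning on the value of a single variable, or of $S$, and uses that $N$ is even. Throughout, write $S$ for the sum of the $N$ Bernoulli variables with parameters $\vec u$.

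For \eqref{eq1921sat}, add an independent Bernoulli variable $Y$ with parameter $v$. Since $N+1$ is odd, a strict majority of $N+1$ means $S+Y\ge N/2+1$. This happens exactly when either $S>N/2$, or $S=N/2$ and $Y=1$. These events are disjoint, and $Y$ is independent of $S$, so the probability is $w+\tau v$.

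For \eqref{eq1922sat}, let $S_i=S-X_i$. A strict majority among $N-1$ variables means $S_i>(N-1)/2$, i.e. $S_i\ge N/2$. Split according to $X_i$:
\begin{itemize}
\item if $X_i=0$, then $S_i=S$, and the condition is $S\ge N/2$;
\item if $X_i=1$, the condition is $S\ge N/2+1$.
\end{itemize}
Hence the event $\{S_i\ge N/2\}$ equals $\{S>N/2\}\cup\{S=N/2,\ X_i=0\}$. Its probability is
\[
w+\mathbb P(S=N/2)\,\mathbb P(X_i=0\mid S=N/2)=w+\tau(1-\pi_i).
\]
For \eqref{eq1944sat}, use linearity of conditional expectation:
\[
\sum_i\pi_i=\mathbb E\Bigl[\sum_i X_i\,\Big|\,S=N/2\Bigr]=N/2.
\]
Conditioning is legitimate because $\tau>0$.

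For \eqref{eq1947sat}, replace each $X_i$ by $1-X_i$; these are independent Bernoulli variables with parameters $1-u_i$, and their sum is $N-S$. Then $N-S=N/2$ iff $S=N/2$, which gives $\tau(\vec 1-\vec u)=\tau(\vec u)$. Likewise $N-S>N/2$ iff $S<N/2$, so
\[
w(\vec 1-\vec u)=\mathbb P(S<N/2)=1-w-\tau.
\]

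There is no real obstacle here. The only point requiring care is the bookkeeping of the thresholds $N/2$ versus $(N\pm1)/2$, which is exactly where the evenness of $N$ enters.
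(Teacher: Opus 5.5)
Your proposal is correct and follows essentially the same route as the paper. It uses the same disjoint decompositions $\{S>N/2\}\cup\{S=N/2,\ Y=1\}$ and $\{S-X_i\ge N/2\}=\{S>N/2\}\cup\{S=N/2,\ X_i=0\}$, the identity $\sum_i\pi_i=\mathbb E[S\mid S=N/2]=N/2$, and the reflection $S\mapsto N-S$ for \eqref{eq1947sat}.
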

\begin{proof}
Adding an independent Bernoulli variable with parameter $v$ produces a strict majority if either $S>N/2$, or $S=N/2$ and the new variable equals $1$.
These disjoint cases give \eqref{eq1921sat}.

If $X_i$ is omitted, a strict majority among the remaining $N-1$ variables occurs precisely when $S-X_i\ge N/2$.
Since
\[
\{S-X_i\ge N/2\} =\{S>N/2\}\cup\{S=N/2,\ X_i=0\},
\]
where the union is disjoint, we obtain \eqref{eq1922sat}.

Next, since
\[
\sum_{i=1}^N\pi_i =\mathbb E[S\mid S=N/2]=N/2,
\]
we obtain \eqref{eq1944sat}.

Finally, replacing $\vec u$ by $\vec 1-\vec u$ replaces the distribution of $S$ by that of $N-S$, which yields \eqref{eq1947sat}.
\end{proof}
\subsection{A change of variables and its properties}

We use \emph{log-odds} coordinates, as in the theory of conditional Bernoulli distributions; see, for example, \cite{CL1997, Chen2000}.

Introduce coordinates $(x,\lambda_1,\ldots,\lambda_{N-1})\in\mathbb R^N$ on $(0,1)^N$ by setting
\begin{equation}	\label{eq2018sat}
u_i=\frac{e^{x+\lambda_i}}{1+e^{x+\lambda_i}}, \quad 1\le i<N, \qquad u_N=\frac{e^x}{1+e^x}.
\end{equation}
The inverse transformation is given by
\[
x=\log\frac{u_N}{1-u_N}, \qquad \lambda_i=\log\frac{u_i}{1-u_i}-x, \quad 1\le i<N.
\]
For $\vec\lambda=(\lambda_1,\ldots,\lambda_N)\in\mathbb R^N$, define
\[
H(\vec\lambda) =\sum_{\substack{A\subset\{1,\ldots,N\}\\ \abs{A}=N/2}} e^{\sum_{i\in A}\lambda_i}, \qquad J(\vec\lambda) =\frac{e^{\sum_{i=1}^N\lambda_i}}{H(\vec\lambda)^2}.
\]
We have defined $H$ and $J$ on all of $\mathbb R^N$ so that derivatives with respect to every $\lambda_i$, including $\lambda_N$, are available.

\begin{lemma}		\label{lem03}
Suppose $N \ge 2$ is even. In the coordinates \eqref{eq2018sat}, write $\vec\lambda=(\lambda_1,\ldots,\lambda_{N-1},0)$.
Then
\begin{equation}	\label{eq2033sat}
\begin{aligned}
\tau(x,\lambda_1,\ldots,\lambda_{N-1}) &=\frac{e^{Nx/2}H(\vec\lambda)}{\prod_{i=1}^N(1+e^{x+\lambda_i})},\\
\pi_i(x,\lambda_1,\ldots,\lambda_{N-1}) &=\frac{\partial_{\lambda_i}H(\vec\lambda)}{H(\vec\lambda)},
\qquad 1\le i\le N.
\end{aligned}
\end{equation}
In particular, $\pi_i$ is independent of $x$.
Moreover, the change of variables satisfies
\begin{equation}	\label{eq2034sat}
d\vec u=\tau^2J(\vec\lambda)\,dx\,d\lambda_1\cdots d\lambda_{N-1}.
\end{equation}

The second formula in \eqref{eq2033sat} defines an extension, still denoted by $\pi_i(\vec\lambda)$, to all of $\mathbb R^N$.
Both $J$ and $\pi_i$ are invariant under common translations
\[
\vec\lambda\mapsto\vec\lambda+c(1,\ldots,1).
\]
Furthermore, for every $\vec\lambda\in\mathbb R^N$,
\begin{equation}	\label{eq2035sat}
\partial_{\lambda_i}\log J=1-2\pi_i, \qquad  \partial_{\lambda_i}\pi_i=\pi_i(1-\pi_i), \qquad 1\le i\le N.
\end{equation}
The ordering of the $\pi_i$ agrees with that of the $\lambda_i$:
\begin{equation}	\label{eq2121sat}
\pi_i<\pi_j\iff\lambda_i<\lambda_j.
\end{equation}

Finally, for each fixed $(\lambda_1,\ldots,\lambda_{N-1})$, the map $x\mapsto w(x,\lambda_1,\ldots,\lambda_{N-1})$ is a smooth, strictly increasing bijection from $\mathbb R$ onto $(0,1)$.
Writing $s=\partial_x w$, we have
\begin{equation}	\label{eq2122sat}
\frac{s}{\tau}=\sum_{i=1}^N u_i(1-\pi_i), \qquad \partial_x\left(\frac{s}{\tau}\right)=\sum_{i=1}^N u_i(1-u_i)(1-\pi_i)>0.
\end{equation}
\end{lemma}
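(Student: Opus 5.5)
The plan is to express everything through the logits $y_i=\log\frac{u_i}{1-u_i}$. In the coordinates \eqref{eq2018sat} these are $y_i=x+\lambda_i$ for $i<N$ and $y_N=x$, which is the same as $y_i=x+\lambda_i$ with $\lambda_N=0$. Each item of the lemma then becomes a direct computation with $H$, and the only nontrivial step is the derivative of $w$.

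\emph{Formulas \eqref{eq2033sat}.} Since $u_i=e^{y_i}/(1+e^{y_i})$ and $1-u_i=1/(1+e^{y_i})$,
\[
\tau=\sum_{\abs{A}=N/2}\prod_{i\in A}e^{y_i}\Big/\prod_{i=1}^N(1+e^{y_i})
=\frac{e^{Nx/2}H(\vec\lambda)}{\prod_{i=1}^N(1+e^{x+\lambda_i})}.
\]
The same sum restricted to $A\ni i$ gives $\mathbb P(S=N/2,\,X_i=1)$, and its $\lambda$-part is $\partial_{\lambda_i}H$ because $H$ is affine in each $e^{\lambda_i}$. Dividing, the factor $e^{Nx/2}/\prod_i(1+e^{x+\lambda_i})$ cancels, so $\pi_i=\partial_{\lambda_i}H/H$, which is independent of $x$.

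\emph{Jacobian \eqref{eq2034sat}.} The map $(x,\lambda_1,\dots,\lambda_{N-1})\mapsto \vec y$ is linear with determinant $\pm1$. Also $du_i/dy_i=u_i(1-u_i)$. Hence $d\vec u=\prod_i u_i(1-u_i)\,dx\,d\lambda_1\cdots d\lambda_{N-1}$. Multiplying out,
\[
\tau^2J=\frac{e^{\sum_i y_i}}{\prod_i(1+e^{y_i})^2}=\prod_{i=1}^N u_i(1-u_i),
\]
which is \eqref{eq2034sat}.

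\emph{Translation invariance and \eqref{eq2035sat}.} Under $\vec\lambda\mapsto\vec\lambda+c\vec 1$, the function $H$ gains the factor $e^{Nc/2}$ and $e^{\sum_i\lambda_i}$ gains $e^{Nc}$. So $J$ is unchanged, and $\pi_i$, a ratio of terms that are homogeneous in the same way, is unchanged as well. Next, $\partial_{\lambda_i}\log J=1-2\,\partial_{\lambda_i}H/H=1-2\pi_i$. Because $H$ is affine in $e^{\lambda_i}$, we have $\partial^2_{\lambda_i}H=\partial_{\lambda_i}H$. Therefore
\[
\partial_{\lambda_i}\pi_i=\frac{\partial^2_{\lambda_i}H}{H}-\pi_i^2=\pi_i-\pi_i^2=\pi_i(1-\pi_i).
\]

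\emph{Ordering \eqref{eq2121sat}.} For $i\ne j$, split $H$ according to how $A$ meets $\{i,j\}$:
\[
H=e^{\lambda_i+\lambda_j}P+(e^{\lambda_i}+e^{\lambda_j})Q+R .
\]
Here $Q>0$, since it sums over subsets of size $N/2-1$ of the remaining $N-2$ indices, and there is at least one such subset. This gives $\partial_{\lambda_i}H-\partial_{\lambda_j}H=(e^{\lambda_i}-e^{\lambda_j})Q$, so $\pi_i-\pi_j$ has the sign of $\lambda_i-\lambda_j$.

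\emph{Properties of $w$ and \eqref{eq2122sat}; this is the main step.} Smoothness is clear. As $x\to+\infty$ every $u_i\to1$, so $w\to1$; as $x\to-\infty$ every $u_i\to0$, so $w\to0$. For the derivative, compare $X_i=1$ with $X_i=0$ to get
\[
\partial w/\partial u_i=\mathbb P(S_{-i}=N/2), \qquad S_{-i}=S-X_i .
\]
The key identification is the right way round:
\[
\mathbb P(S=N/2,\,X_i=0)=(1-u_i)\,\mathbb P(S_{-i}=N/2),
\qquad\text{so}\qquad
\mathbb P(S_{-i}=N/2)=\frac{\tau(1-\pi_i)}{1-u_i}.
\]
With $\partial_x u_i=u_i(1-u_i)$ and the chain rule,
\[
s=\sum_{i=1}^N u_i(1-u_i)\cdot\frac{\tau(1-\pi_i)}{1-u_i}=\tau\sum_{i=1}^N u_i(1-\pi_i),
\]
which is the first identity in \eqref{eq2122sat}. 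Since the $\pi_i$ do not depend on $x$, differentiating once more gives
\[
\partial_x(s/\tau)=\sum_{i=1}^N u_i(1-u_i)(1-\pi_i),
\]
and this is positive because $0<\pi_i<1$.

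Finally, $s>0$, so $w$ is strictly increasing in $x$. Together with the limits $0$ and $1$, this makes $x\mapsto w$ a strictly increasing bijection of $\mathbb R$ onto $(0,1)$.
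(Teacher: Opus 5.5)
Your proposal is correct and follows essentially the same route as the paper: the same subset-sum formulas for $\tau$ and $\pi_i$, the same pairing argument giving $\pi_i-\pi_j\propto(e^{\lambda_i}-e^{\lambda_j})$, and the same conditioning identity $\tau(1-\pi_i)=(1-u_i)\,\mathbb P(S-X_i=N/2)$ to obtain $s/\tau$. The only differences are cosmetic. You factor the Jacobian through the logits, whereas the paper uses a triangular matrix directly. You also get the limits of $w$ from continuity of the polynomial $w$ at $\vec 0$ and $\vec 1$, whereas the paper uses the explicit bounds $\prod u_i\le w\le\sum u_i$.
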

\begin{proof}
For each $A\subset\{1,\ldots,N\}$ with $\abs{A}=N/2$, independence gives
\[
\mathbb P\left(\{i:X_i=1\}=A\right) =\prod_{i\in A}u_i\prod_{i\notin A}(1-u_i)=\frac{e^{Nx/2+\sum_{i\in A}\lambda_i}} {\prod_{i=1}^N(1+e^{x+\lambda_i})},
\]
where $\lambda_N=0$.
Summing over all such subsets gives the formula for $\tau$ in \eqref{eq2033sat}.
Similarly,
\[
\partial_{\lambda_i}H =\sum_{\substack{A\subset\{1,\ldots,N\}\\ \abs{A}=N/2,\ i\in A}} e^{\sum_{j\in A}\lambda_j}.
\]
Restricting the preceding probability sum to subsets containing $i$ and dividing by $\tau$ therefore gives
\[
\pi_i=\frac{\partial_{\lambda_i}H}{H}.
\]
In particular, $\pi_i$ is independent of $x$.

Writing $h_i=u_i(1-u_i)$, the Jacobian matrix with respect to $(\lambda_1,\ldots,\lambda_{N-1},x)$ is upper triangular, with diagonal entries $h_1,\ldots,h_N$.
Thus, its determinant is
\[
\prod_{i=1}^N u_i(1-u_i) =\frac{e^{Nx+\sum_{i=1}^N\lambda_i}} {\prod_{i=1}^N(1+e^{x+\lambda_i})^2} =\tau^2J,
\]
which proves \eqref{eq2034sat}.

Since
\[
H(\vec\lambda+c(1,\ldots,1)) =e^{Nc/2}H(\vec\lambda),
\]
both $J$ and $\pi_i$ are invariant under common translations.
Differentiating the definition of $J$ gives
\[
\partial_{\lambda_i}\log J=1-2\pi_i.
\]
Moreover, since $\partial_{\lambda_i}^2H=\partial_{\lambda_i}H$, we obtain
\[
\partial_{\lambda_i}\pi_i = \partial_{\lambda_i} \left(\frac{\partial_{\lambda_i}H}{H}\right)=\frac{\partial_{\lambda_i}H}{H} -\left(\frac{\partial_{\lambda_i}H}{H}\right)^2 =\pi_i(1-\pi_i),
\]
proving \eqref{eq2035sat}.

For $i\ne j$, recall that
\[
\pi_i-\pi_j =\frac{\partial_{\lambda_i}H- \partial_{\lambda_j}H}{H}.
\]
In the difference of the two derivatives, the terms corresponding to subsets containing both $i$ and $j$
cancel.
Every remaining subset contains exactly one of $i$ and $j$.
Pairing the subsets that differ only by replacing $i$ with $j$, we obtain
\[
\pi_i-\pi_j =\frac{e^{\lambda_i}-e^{\lambda_j}}{H(\vec\lambda)} \sum_{\substack{ A\subset\{1,\ldots,N\}\setminus\{i,j\}\\ \abs{A}=N/2-1}} e^{\sum_{\ell\in A}\lambda_\ell}.
\]
The sum is strictly positive, including when $N=2$, in which case it equals $1$.
This proves \eqref{eq2121sat}.

Next, let $S_i=S-X_i$.
By conditioning on $X_i$ and using independence, we have
\[
w=\mathbb P(S_i + X_i > N/2)= u_i\mathbb P(S_i\ge N/2)  +(1-u_i)\mathbb P(S_i>N/2).
\]
Since the distribution of $S_i$ does not depend on $u_i$, differentiation gives
\[
\frac{\partial w}{\partial u_i} =\mathbb P(S_i\ge N/2) -\mathbb P(S_i>N/2) =\mathbb P(S_i=N/2).
\]
On the other hand, by the definition of $\pi_i$ and independence,
\[
\tau(1-\pi_i) =\mathbb P(S=N/2,\ X_i=0)=\mathbb P(S_i=N/2,\ X_i=0)=(1-u_i)\mathbb P(S_i=N/2).
\]
Combining these identities and using $\partial_xu_i=u_i(1-u_i)$, the chain rule gives
\[
s=\partial_x w =\sum_{i=1}^N u_i(1-u_i) \frac{\partial w}{\partial u_i} =\tau\sum_{i=1}^N u_i(1-\pi_i).
\]
Since $\pi_i$ is independent of $x$, differentiating once more yields
\[
\partial_x\left(\frac{s}{\tau}\right) =\sum_{i=1}^N u_i(1-u_i)(1-\pi_i)>0,
\]
which proves \eqref{eq2122sat}.

Finally, $s>0$, so $w$ is strictly increasing in $x$.
For each fixed $(\lambda_1,\ldots,\lambda_{N-1})$,
we have $u_i\to0$ as $x\to-\infty$ and $u_i\to1$
as $x\to+\infty$ for every $i$.
Since
\[
0 \le w=\mathbb P(S>N/2) \le\mathbb P(S\ge1) \le\sum_{i=1}^N u_i
\]
and
\[
1 \ge w=\mathbb P(S>N/2) \ge\mathbb P(S=N) =\prod_{i=1}^N u_i,
\]
it follows that $w$ tends to $0$ and $1$, respectively.
Thus, the map $x\mapsto w(x,\lambda_1,\ldots,\lambda_{N-1})$ is a smooth, strictly increasing bijection from $\mathbb R$ onto $(0,1)$.
\end{proof}

If a differentiable function $G$ on $\mathbb R^N$ is invariant under common translations, then
\[
G(\vec\lambda+c(1,\ldots,1))=G(\vec\lambda)
\]
for every $c\in\mathbb R$.
Differentiating with respect to $c$ at $c=0$ and using the chain rule, we obtain
\begin{equation}	\label{eq2213sat}
\sum_{i=1}^N\partial_{\lambda_i}G=0.
\end{equation}

\subsection{Integrability of $1/\tau$}
We will also need the following integrability estimate.
\begin{lemma}	\label{lem04}
Suppose $N\ge2$ is even. Then
\[
\int_{(0,1)^N}\frac{d\vec u}{\tau(\vec u)} \le 3^{N/2}<\infty.
\]
\end{lemma}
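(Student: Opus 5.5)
The plan is to bound $\tau$ from below by a product of simple two-variable factors, after which the integral factorizes. Pair the indices as $\{1,2\},\{3,4\},\ldots,\{N-1,N\}$. The tie event $\{S=N/2\}$ contains the event that within each pair exactly one of the two variables equals $1$: each pair then contributes exactly $1$, so $S=N/2$. The pairs are independent, so
\[
\tau(\vec u)\ \ge\ \prod_{k=1}^{N/2} f(u_{2k-1},u_{2k}),\qquad f(u,v)=u(1-v)+v(1-u)=u+v-2uv.
\]
Note that $f>0$ on $(0,1)^2$. Taking reciprocals and applying Tonelli's theorem to the nonnegative integrand gives
\[
\int_{(0,1)^N}\frac{d\vec u}{\tau(\vec u)}\ \le\ \left(\int_0^1\!\!\int_0^1\frac{du\,dv}{u+v-2uv}\right)^{N/2}.
\]
It therefore suffices to show that the two-dimensional integral is at most $3$.

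For the two-dimensional integral, fix $u\in(0,1)$ with $u\ne 1/2$. The integrand is the reciprocal of the linear function $v\mapsto u+v(1-2u)$, so
\[
\int_0^1\frac{dv}{u+v(1-2u)}=\frac{1}{1-2u}\log\frac{1-u}{u}.
\]
This expression is positive and extends continuously to $u=1/2$ with value $2$. Substituting $u=(1-t)/2$ turns the remaining integral into
\[
\frac12\int_{-1}^{1}\frac1t\log\frac{1+t}{1-t}\,dt=\int_0^1\frac1t\log\frac{1+t}{1-t}\,dt .
\]
The integrand is even in $t$, which gives the last equality. Expanding $\log\frac{1+t}{1-t}=2\sum_{k\ge0}t^{2k+1}/(2k+1)$ and integrating term by term, which is justified by monotone convergence since all terms are nonnegative, gives
\[
2\sum_{k\ge0}\frac{1}{(2k+1)^2}=\frac{\pi^2}{4}<3 .
\]
This yields the bound $3^{N/2}$.

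The only real point is the first step: choosing a lower bound for $\tau$ that is both valid and integrable after taking reciprocals. A naive bound such as a single term $\prod_{i\in A}u_i\prod_{i\notin A}(1-u_i)$ fails, because its reciprocal is not integrable near the faces of the cube. Pairing works because each factor $f(u,v)$ vanishes only at the corners $(0,0)$ and $(1,1)$. There $f$ behaves like $u+v$, respectively $(1-u)+(1-v)$, and $1/(u+v)$ is integrable in two dimensions. The remaining computation is routine. If one prefers to avoid evaluating $\pi^2/4$, a cruder estimate also suffices, for instance $\log\frac{1+t}{1-t}\le 2t/(1-t)$ combined with splitting near $t=1$.
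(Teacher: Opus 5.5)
Your proof is correct and follows the paper's argument: the same pairing lower bound $\tau(\vec u)\ge\prod_{k}\bigl(u_{2k-1}+u_{2k}-2u_{2k-1}u_{2k}\bigr)$, followed by Tonelli to factorize the integral. The only difference is the last step. You evaluate the two-dimensional integral exactly as $\pi^2/4$ using the series for $\log\frac{1+t}{1-t}$. The paper instead bounds it by $3$ elementarily, splitting the unit square into four subsquares and using $a+b-2ab\ge\max(a,b)$ on $(0,1/2)^2$ (and its reflection) and $a+b-2ab\ge 1/2$ on the two off-diagonal squares. Your route gives the slightly sharper constant $(\pi^2/4)^{N/2}$, while the paper's avoids any series computation.
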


\begin{proof}
Group the Bernoulli variables into $N/2$ pairs $(X_{2j-1},X_{2j})$, $j=1,\ldots,N/2$.
Since
\[
\bigcap_{j=1}^{N/2}\{X_{2j-1}+X_{2j}=1\} \subset\{S=N/2\},
\]
independence gives
\[
\tau(\vec u) \ge \prod_{j=1}^{N/2} \left(u_{2j-1}(1-u_{2j}) +(1-u_{2j-1})u_{2j}\right).
\]
Consequently, Tonelli's theorem gives
\[
\int_{(0,1)^N}\frac{d\vec u}{\tau(\vec u)} \le \left( \int_0^1\int_0^1 \frac{da\,db}{a+b-2ab} \right)^{N/2}.
\]

We show that the double integral is at most $3$.
On $(0,1/2)^2$, we have $a+b-2ab\ge\max(a,b)$, and hence
\[
\int_0^{1/2}\int_0^{1/2} \frac{da\,db}{a+b-2ab} \le 2\int_0^{1/2}\int_0^b\frac{da\,db}{b} =1.
\]
By the change of variables $(a,b)\mapsto(1-a,1-b)$, the integral over $(1/2,1)^2$ is also at most $1$.
On each of the two remaining squares, $a+b-2ab\ge1/2$, so each contributes at most $1/2$.
Thus, the double integral is at most $1+1+1/2+1/2=3$, as required.
\end{proof}

\section{Proof of Theorem~\ref{thm02}}	\label{sec3}
Fix $m\ge2$ and an odd integer $n\ge3$, and set $N=n-1$.
We use the notation from Section~\ref{sec2} with this even $N \ge 2$.

Set $g(y)=y^m(1-y)$, so that $g'(y)=my^{m-1}-(m+1)y^m$.
Writing $v=u_n$ and $\vec u=(u_1,\ldots,u_N)$, we obtain from \eqref{eq1902sat} and \eqref{eq1921sat} that
\begin{align*}
C(m,n)-C(m+1,n) &=\int_{(0,1)^n}g'(W_n)\,du_1\cdots du_n\\
&=\int_{(0,1)^N}\int_0^1 g'(w+\tau v)\,dv\,d\vec u\\
&=\int_{(0,1)^N} \frac{g(w+\tau)-g(w)}{\tau}\,d\vec u.
\end{align*}
Since $0<w<w+\tau<1$, both terms in the last integrand are bounded in absolute value by $\norm{g}_{L^\infty(0,1)}/\tau$.
They are therefore separately integrable by Lemma~\ref{lem04}.

Apply the reflection $\vec u\mapsto\vec 1-\vec u$ to the term containing $g(w+\tau)$.
By \eqref{eq1947sat}, this transformation preserves Lebesgue measure and $\tau$, and
\[
w(\vec 1-\vec u)+\tau(\vec 1-\vec u) =1-w(\vec u).
\]
Thus,
\[
\int_{(0,1)^N}\frac{g(w+\tau)}{\tau}\,d\vec u =\int_{(0,1)^N}\frac{g(1-w)}{\tau}\,d\vec u.
\]
Consequently,
\[
C(m,n)-C(m+1,n) =\int_{(0,1)^N} \frac{g(1-w)-g(w)}{\tau}\,d\vec u.
\]

We now use the coordinates of \eqref{eq2018sat}, with $\vec\lambda=(\lambda_1,\ldots,\lambda_{N-1},0)$.
For each fixed $\vec\lambda$ and $y\in(0,1)$, let $x=x_{\vec\lambda}(y)$ denote the unique solution
of $w(x,\vec\lambda)=y$, whose existence follows from Lemma~\ref{lem03}.
Writing $s=\partial_x w$, define
\[
\rho_{\vec\lambda}(y) =\frac{\tau(x_{\vec\lambda}(y),\vec\lambda)} {s(x_{\vec\lambda}(y),\vec\lambda)}, \qquad 0<y<1.
\]
Since $s/\tau$ is strictly increasing in $x$ by \eqref{eq2122sat}, and $w$ is strictly increasing in $x$, the function $\rho_{\vec\lambda}$ is strictly decreasing in $y$.

By the Jacobian formula \eqref{eq2034sat} and the substitution $y=w(x,\vec\lambda)$, for which $dy=s\,dx$, we obtain
\[
\frac{d\vec u}{\tau} =\tau J(\vec\lambda)\,dx\, d\lambda_1\cdots d\lambda_{N-1}
=J(\vec\lambda)\rho_{\vec\lambda}(y)\,dy\, d\lambda_1\cdots d\lambda_{N-1}.
\]
It follows that
\[
C(m,n)-C(m+1,n) =\int_{\mathbb R^{N-1}}J(\vec\lambda) \int_0^1 [g(1-y)-g(y)] \rho_{\vec\lambda}(y)\,dy\, d\lambda_1\cdots d\lambda_{N-1}.
\]
The integral above is absolutely convergent.
Indeed, the preceding change of variables and Lemma~\ref{lem04} give
\[
\int_{\mathbb R^{N-1}}J(\vec\lambda) \int_0^1\rho_{\vec\lambda}(y)\,dy\, d\lambda_1\cdots d\lambda_{N-1} =\int_{(0,1)^N}\frac{d\vec u}{\tau}<\infty.
\]
Since $\abs{g(1-y)-g(y)} \le 2 \norm{g}_{L^\infty(0,1)}$, absolute convergence follows.

Splitting the inner integral at $y=1/2$ and substituting $y\mapsto1-y$ in the second half, we obtain
\begin{multline}		\label{eq2156sat}
C(m,n)-C(m+1,n)\\
=\int_{\mathbb R^{N-1}}J(\vec\lambda) \int_0^{1/2} \left(g(1-y)-g(y)\right) \left(\rho_{\vec\lambda}(y) -\rho_{\vec\lambda}(1-y)\right)\,dy\, d\lambda_1\cdots d\lambda_{N-1}.
\end{multline}
For $0<y<1/2$, we have
\[
g(1-y)-g(y) =y(1-y)\left((1-y)^{m-1}-y^{m-1}\right)>0.
\]
Also, since $\rho_{\vec\lambda}$ is strictly decreasing,
\[
\rho_{\vec\lambda}(y) -\rho_{\vec\lambda}(1-y)>0.
\]
As $J(\vec\lambda)>0$, the integral in \eqref{eq2156sat} is strictly positive.
Thus $C(m,n)>C(m+1,n)$.
\qed

\section{Proof of Theorem~\ref{thm01}}	\label{sec4}
Fix $m\ge3$ and an odd integer $n\ge1$, and set $N=n+1$.
Then $N \ge 2$ is even, and throughout this section, we use the functions $w$, $\tau$, $\pi_i$, and $J$ from Section~\ref{sec2} with this value of $N$.
We will prove that $C(m,n)>C(m,n+2)$, as asserted in Theorem~\ref{thm01}.

\subsection{Reduction to $\Delta_k$}
By \eqref{eq1902sat} and \eqref{eq1922sat}, for each $i=1,\ldots,N$, we have
\[
\frac{C(m,n)}{m} =\int_{(0,1)^N} \left(w+\tau(1-\pi_i)\right)^{m-1}\,d\vec u.
\]
Indeed, by \eqref{eq1922sat}, the integrand equals
\[
W_{N-1}(u_1,\ldots,\widehat{u_i},\ldots,u_N)^{m-1},
\]
which is independent of $u_i$.
Thus, integration with respect to $u_i$ contributes a factor of $1$.
Averaging over $i=1,\ldots,N$, we obtain
\[
\frac{C(m,n)}{m} =\int_{(0,1)^N} \frac1N\sum_{i=1}^N \left(w+\tau(1-\pi_i)\right)^{m-1}\,d\vec u.
\]
Similarly, \eqref{eq1902sat} and \eqref{eq1921sat} give
\[
\frac{C(m,n+2)}{m} =\int_{(0,1)^N}\int_0^1 (w+\tau v)^{m-1}\,dv\,d\vec u.
\]

Expanding both expressions by the binomial theorem, the coefficient of $w^{m-1-k}\tau^k$ in their difference is
\[
\binom{m-1}{k} \left(\frac1N\sum_{i=1}^N(1-\pi_i)^k -\int_0^1v^k\,dv \right).
\]
For $k=0$, this coefficient vanishes.
It also vanishes for $k=1$, since
\[
\frac1N\sum_{i=1}^N(1-\pi_i) =\frac12=\int_0^1v\,dv
\]
by \eqref{eq1944sat}.
Since $\int_0^1v^k\,dv=1/(k+1)$, we obtain
\begin{equation}	\label{eq2202sat}
\frac{C(m,n)-C(m,n+2)}{m} =\sum_{k=2}^{m-1}\binom{m-1}{k} \,\Delta_k,
\end{equation}
where
\begin{equation}	\label{eq2205sat}
\Delta_k =\int_{(0,1)^N}\tau^k w^{m-1-k} \left\{ \frac1N\sum_{i=1}^N(1-\pi_i)^k -\frac1{k+1} \right\}\,d\vec u.
\end{equation}
We will prove that $\Delta_k>0$ for every $2\le k\le m-1$.

\subsection{Monotonicity along level sets of $w$}
We study how $\tau$ and $s=\partial_x w$ change when the parameters vary while $w$ remains fixed.
We first recall a well-known log-concavity property of sums of independent Bernoulli random variables.
\begin{lemma}	\label{lem05}
Let $T$ be a sum of finitely many independent Bernoulli random variables, and set $p_j=\mathbb P(T=j)$, with $p_j=0$ outside the support of $T$. Then
\[
p_j^2\ge p_{j-1}p_{j+1} \qquad\text{for every integer }j.
\]
\end{lemma}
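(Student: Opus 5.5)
We prove the log-concavity by induction on the number of Bernoulli summands, via the classical fact that convolution preserves log-concavity of sequences with no internal zeros. The base case is a single Bernoulli variable, or the degenerate sum $T=0$. Then $(p_j)$ is supported on at most $\{0,1\}$, and $p_j^2\ge p_{j-1}p_{j+1}$ holds trivially, since for every $j$ one of $p_{j-1},p_{j+1}$ vanishes.

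For the inductive step, write $T'=T+X$, where $X$ is an independent Bernoulli variable with parameter $u$. Put $q_j=\mathbb P(T'=j)=(1-u)p_j+up_{j-1}$. We must show $q_j^2-q_{j-1}q_{j+1}\ge0$. Expanding,
\[
q_j^2-q_{j-1}q_{j+1}=(1-u)^2\bigl(p_j^2-p_{j-1}p_{j+1}\bigr)+u^2\bigl(p_{j-1}^2-p_{j-2}p_j\bigr)+u(1-u)\bigl(p_jp_{j-1}-p_{j-2}p_{j+1}\bigr).
\]
The first two brackets are nonnegative by the induction hypothesis. For the cross term we need $p_{j-1}p_j\ge p_{j-2}p_{j+1}$. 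If $p_{j-2}p_{j+1}=0$, this is immediate. Otherwise, the support of a Bernoulli sum is an integer interval, so $p_{j-1},p_j>0$. Multiplying the two inequalities $p_{j-1}^2\ge p_{j-2}p_j$ and $p_j^2\ge p_{j-1}p_{j+1}$ gives $p_{j-1}^2p_j^2\ge p_{j-2}p_{j+1}\,p_{j-1}p_j$. Dividing by $p_{j-1}p_j>0$ yields the claim.

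The main subtlety is this cross term: it requires the no-internal-zeros property. That property holds because the support of a sum of Bernoulli variables is always $\{a,a+1,\dots,b\}$, where $a$ counts the summands with parameter $1$ and $b$ counts those with parameter $>0$. If all parameters lie in $(0,1)$, as in our applications, the support is all of $\{0,\dots,N\}$.

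Alternatively, Newton's inequalities give a direct route when all parameters lie in $(0,1)$. The generating polynomial $\prod_i(1-u_i+u_it)$ has only real nonpositive roots, so $p_j/\binom{n}{j}$ is log-concave. Since $\binom{n}{j}$ is itself log-concave, this implies $p_j^2\ge p_{j-1}p_{j+1}$, in fact with the stronger factor $(1+1/j)(1+1/(n-j))$. I would present the elementary induction above, possibly noting the Newton route as a remark.
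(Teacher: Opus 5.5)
Your proof is correct, but your main argument takes a different route from the paper. The paper proves the lemma in one line by citing the log-concavity of elementary symmetric functions, i.e.\ Newton's inequalities \cite[Section 2.22, Theorem 53]{HLP1952}. Writing $p_j=\prod_i(1-u_i)\,e_j(r_1,\dots,r_n)$ with $r_i=u_i/(1-u_i)$, these give $p_j^2\ge p_{j-1}p_{j+1}$, and in fact the sharper form $p_j^2\ge(1+1/j)(1+1/(n-j))\,p_{j-1}p_{j+1}$. This is exactly the route in your closing remark.

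Your main argument instead proves the statement from scratch, by induction, showing that convolution with a two-point law preserves log-concavity. Your expansion of $q_j^2-q_{j-1}q_{j+1}$ is correct. You also handle the cross term $p_{j-1}p_j\ge p_{j-2}p_{j+1}$ properly: you multiply the two neighbouring inequalities and use the fact that the support is an integer interval.

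Your route buys self-containedness and full generality. It covers parameters equal to $0$ or $1$, and the empty sum, with no change to odds variables. The empty sum is the case $(a,b,c)=(0,1,0)$ for $N=2$ in Lemma~\ref{lem06}. The Newton route needs a small reduction in the degenerate cases: discard the deterministic summands and shift the index. The citation buys brevity and the stronger ultra-log-concave inequality. The paper never needs that extra strength: only $b^2\ge ac$ is used, and there $T=S-X_i-X_j$ has all parameters in $(0,1)$, so either approach suffices.
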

\begin{proof}
This follows from the log-concavity of elementary symmetric functions; see \cite[Section 2.22, Theorem 53]{HLP1952}.
\end{proof}

For the next two lemmas, we extend the parametrization by allowing $\lambda_N$ to vary.
For $\vec\lambda\in\mathbb R^N$, set
\[
u_i=\frac{e^{x+\lambda_i}}{1+e^{x+\lambda_i}}, \qquad 1\le i\le N,
\]
and retain the notation $w(x,\vec\lambda)$ and $\tau(x,\vec\lambda)$ for the corresponding probabilities.
The formulas \eqref{eq2033sat} and \eqref{eq2122sat} remain valid, and for each fixed $\vec\lambda$, the map $x\mapsto w(x,\vec\lambda)$ remains a smooth, strictly increasing bijection from $\mathbb R$ onto $(0,1)$.
Moreover,
\begin{equation}	\label{eq2302sun}
\tau(x,\vec\lambda)=O(e^{-N\abs{x}/2})
\qquad (\abs{x}\to\infty).
\end{equation}
Since $x\mapsto w(x,\vec\lambda)$ is a smooth, strictly increasing bijection from $\mathbb R$ onto $(0,1)$ for every fixed $\vec\lambda\in\mathbb R^N$, given $\vec\lambda,\vec v\in\mathbb R^N$ and $y\in(0,1)$, there exists a unique $x=x(t)$ for each $t\in\mathbb R$ such that
\[
w(x(t),\vec\lambda+t\vec v)=y.
\]

Moreover, since $\partial_x w>0$, the implicit function theorem shows that $x(t)$ is smooth in a neighborhood of every $t\in\mathbb R$, and hence on all of $\mathbb R$.

The following lemma establishes a monotonicity property of the tie probability and the derivative of the majority probability along level sets of $w$.
\begin{lemma}	\label{lem06}
Suppose $N\ge2$ is even, and let $\vec\lambda,\vec v\in\mathbb R^N$ satisfy
\begin{equation}	\label{eq2208sat}
(v_i-v_j)(\lambda_i-\lambda_j)\ge0 \quad\text{for all }\; i, j.
\end{equation}
For any fixed $y\in(0,1)$, let $x=x(t)$ be defined as above.
Writing $s=\partial_x w$, and using a dot to denote differentiation along this curve at $t=0$, we have
\[
\dot\tau\ge0,\qquad \dot s \le 0.
\]
Moreover, $\dot s<0$ if the inequality in \eqref{eq2208sat} is strict for at least one pair.
\end{lemma}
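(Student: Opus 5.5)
The plan is to differentiate along the curve $t\mapsto(x(t),\vec\lambda+t\vec v)$ explicitly, reduce both claims to weighted-average comparisons, and settle those comparisons with the log-concavity of Lemma~\ref{lem05}. Write $M=N/2$ and $S_i=S-X_i$.

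\emph{Computing $\dot x$.} The proof of Lemma~\ref{lem03} shows
\[
\partial_{\lambda_i}w=u_i(1-u_i)\,\mathbb P(S_i=M)=\tau\,a_i,\qquad a_i:=u_i(1-\pi_i),
\]
and by \eqref{eq2122sat}, $s=\tau\sum_i a_i$. Differentiating $w(x(t),\vec\lambda+t\vec v)=y$ then gives
\[
\dot x=-\bar v_a,\qquad \bar v_a:=\frac{\sum_i a_iv_i}{\sum_i a_i}.
\]
So each log-odds $x+\lambda_i$ moves at rate $v_i-\bar v_a$.

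\emph{The claim $\dot\tau\ge0$.} From \eqref{eq2033sat},
\[
\log\tau=Mx+\log H(\vec\lambda)-\sum_i\log\bigl(1+e^{x+\lambda_i}\bigr),
\]
so $\partial_{\lambda_i}\log\tau=\pi_i-u_i$ and $\partial_x\log\tau=\sum_i(\pi_i-u_i)$. Set $b_i:=\pi_i(1-u_i)$, so that $\pi_i-u_i=b_i-a_i$. Then
\[
\frac{d}{dt}\log\tau=\sum_i(v_i-\bar v_a)(b_i-a_i)=\Bigl(\sum_i b_i\Bigr)\bigl(\bar v_b-\bar v_a\bigr),
\]
where $\bar v_b$ is the $b$-weighted mean of the $v_i$. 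Hence $\dot\tau\ge0$ amounts to showing that $b$-weights favour large $v_i$ more than $a$-weights do. By Chebyshev's sum inequality this follows once the ratio $b_i/a_i$ is ordered like $\lambda_i$, since \eqref{eq2208sat} makes $v$ comonotone with $\lambda$.

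\emph{Ordering of $b_i/a_i$.} Conditioning on $X_i$ gives
\[
\frac{b_i}{a_i}=\frac{\mathbb P(S_i=M-1)}{\mathbb P(S_i=M)}.
\]
For a pair $i\ne j$, let $R=S-X_i-X_j$ with pmf $r_k$. Then
\[
\frac{b_i}{a_i}=\frac{u_jr_{M-2}+(1-u_j)r_{M-1}}{u_jr_{M-1}+(1-u_j)r_M}.
\]
Call this $\phi(u_j)$; the same function gives $b_j/a_j=\phi(u_i)$. The derivative of $\phi$ has the sign of $r_{M-2}r_M-r_{M-1}^2\le0$ by Lemma~\ref{lem05}, so $\phi$ is nonincreasing. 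Thus $\lambda_i>\lambda_j$ gives $u_i>u_j$ and hence $b_i/a_i\ge b_j/a_j$. Strict log-concavity holds here (Newton's inequality is strict for positive, not all equal, parameters, and it can be checked directly in the degenerate cases $N=2,4$), which will be needed for the strict statement.

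\emph{Reduction for $\dot s$.} The derivative along the curve is linear in $\vec v$. Adding a constant vector to $\vec v$ changes nothing, since it is undone by a shift in $x$ by translation invariance. Order the indices by $\lambda$, breaking ties by $v$. Then $\vec v$ is a constant plus a nonnegative combination of indicator vectors $\mathbf 1_U$ of upper sets $U$ in this order, and each such indicator satisfies \eqref{eq2208sat}. So it suffices to prove $\dot s\le0$ for $\vec v=\mathbf 1_U$, with strictness when $U$ separates two indices with $\lambda_i\ne\lambda_j$ (equivalently, when the combination uses such a $U$ with positive coefficient).

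\emph{The claim $\dot s\le0$: main obstacle.} Here I would use
\[
\frac{d}{dt}\log s=\frac{d}{dt}\log\tau+\frac{d}{dt}\log\sum_i a_i.
\]
The $\pi_i$ depend only on $\vec\lambda$, with derivatives computable from \eqref{eq2035sat} and \eqref{eq2213sat}, while $u_i$ moves by $u_i(1-u_i)(v_i-\bar v_a)$. Alternatively I would write
\[
s=\sum_i u_i(1-u_i)\,\mathbb P(S_i=M)
\]
and expand everything in terms of $p_{M-1},p_M,p_{M+1}$ of the relevant partial sums. The aim is to collect $\frac{d}{dt}\log s$ into a weighted covariance of $v$ against a quantity that is decreasing in $\lambda_i$: larger-$u_i$ variables contribute less to the variance-type term $u_i(1-u_i)$ and to the tie mass after rebalancing. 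The sign of that covariance should again follow from log-concavity, now in the three-term form $p_{M-1}p_{M+1}\le p_M^2$ applied to $R$ for pairs $i,j$. I expect this step to be the main obstacle. If the direct covariance form resists, I would first handle the two-block case $\vec v=\mathbf 1_U$ concretely, where $\dot x=-\sum_{U}a_i/\sum a_i$, and aim to exhibit $\dot s$ as a sum over pairs $i\in U$, $j\notin U$ of nonpositive terms. Strict negativity would then come from any pair with $\lambda_i>\lambda_j$, via the strict log-concavity above.
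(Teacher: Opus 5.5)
Your argument for $\dot\tau\ge0$ is correct. The identities $\partial_{\lambda_i}w=\tau a_i$, $\dot x=-\bar v_a$, $\frac{d}{dt}\log\tau=(\sum_i b_i)(\bar v_b-\bar v_a)$ and $b_i/a_i=\mathbb P(S_i=M-1)/\mathbb P(S_i=M)=\phi(u_j)$ all check out. Your weighted Chebyshev step is a repackaging of the paper's pairwise formula, whose summands are $(v_i-v_j)(u_i-u_j)(b^2-ac)$ times positive factors. The reduction to $\vec v=\mathbf 1_U$ is also valid, though it is not needed.

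The gap is the other half of the lemma. You never prove $\dot s\le0$, nor its strict version; you only list strategies and flag this step as the main obstacle. This is not a minor omission, because the strictness in Lemma~\ref{lem07}, and hence $\Delta_k>0$, rests entirely on $\dot s<0$.

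The missing ingredient is an explicit formula for $\partial s/\partial u_i$, and it needs no log-concavity at all. Since $\mathbb E[(X_i-u_i)\mathbbm1_{\{S>M\}}]=u_i(1-u_i)\mathbb P(S_i=M)$, summing over $i$ gives $s=\mathbb E[S\mathbbm1_{\{S>M\}}]-(\sum_\ell u_\ell)w$. Conditioning on $X_i$ then yields $\partial s/\partial u_i=(M+1-\sum_\ell u_\ell-u_i)\,\mathbb P(S_i=M)$.

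Now use three facts: $s$ is a function of $\vec u$ alone, $\dot u_i=u_i(1-u_i)(v_i-\bar v_a)$, and $u_i(1-u_i)\mathbb P(S_i=M)=\tau a_i$. The constant part $M+1-\sum_\ell u_\ell$ cancels because $\sum_i a_i(v_i-\bar v_a)=0$, and you are left with
\[
\dot s=-\tau\sum_i a_iu_i(v_i-\bar v_a)=-\frac{\tau}{\sum_\ell a_\ell}\sum_{i<j}a_ia_j(u_i-u_j)(v_i-v_j).
\]
This is $\le0$ because $u$ is ordered like $\lambda$ at fixed $x$. It is $<0$ as soon as one pair has $(v_i-v_j)(\lambda_i-\lambda_j)>0$, since then $u_i\ne u_j$ and $a_i,a_j>0$.

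So the ``weighted covariance against a quantity decreasing in $\lambda_i$'' you were hoping for does exist, and that quantity is simply $-u_i$. Your proposed split $\log s=\log\tau+\log\sum_i a_i$ is a detour: its first term has the wrong sign, and the second requires the off-diagonal derivatives $\partial_{\lambda_j}\pi_i$. Your expectation that strict log-concavity would supply the strictness is also misplaced.
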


\begin{proof}
Set
\[
h_i=u_i(1-u_i),\qquad \gamma_i=\mathbb P(S-X_i=N/2).
\]
Since $\partial_{u_i}w=\gamma_i$ and $\partial_x u_i=h_i$, the chain rule gives
\[
s=\partial_xw=\sum_{i=1}^N h_i\gamma_i>0.
\]

Along the curve $t\mapsto(x(t),\vec\lambda+t\vec v)$, we have
\[
u_i(t)=\frac{e^{x(t)+\lambda_i+tv_i}}{1+e^{x(t)+\lambda_i+t v_i}},
\]
and hence
\[
\dot u_i=h_i(\dot x+v_i).
\]
Differentiating $w(x(t),\vec\lambda+t\vec v)=y$ gives
\[
0=\sum_{i=1}^N\gamma_i h_i(\dot x+v_i).
\]
Consequently,
\[
\dot x=-\eta,\qquad \dot u_i=h_i(v_i-\eta),\qquad \eta=\frac1s \sum_{j=1}^N h_j\gamma_j v_j.
\]

For any smooth function $f=f(\vec u)$, the chain rule gives
\[
\dot f =\sum_{i=1}^N \frac{\partial f}{\partial u_i}\dot u_i =\sum_{i=1}^N \frac{\partial f}{\partial u_i} h_i(v_i-\eta)\\
=\frac1s \sum_{i=1}^N \sum_{j=1}^N h_i h_j\frac{\partial f}{\partial u_i} \gamma_j(v_i-v_j),
\]
where the last equality follows from $s=\sum_{j=1}^N h_j\gamma_j$ and the definition of $\eta$.
Pairing the terms corresponding to $(i,j)$ and $(j,i)$, we obtain
\begin{equation}	\label{eq2214sat}
\dot f =\frac1s\sum_{i<j}h_ih_j(v_i-v_j) \left(\frac{\partial f}{\partial u_i} \gamma_j-  \frac{\partial f}{\partial u_j} \gamma_i \right).
\end{equation}

We first apply this identity to $f=\tau$.
Fix $i\ne j$, let $T=S-X_i-X_j$, and set
\[
a=\mathbb P(T=N/2-2),\qquad b=\mathbb P(T=N/2-1),\qquad c=\mathbb P(T=N/2).
\]
Since $S-X_i=T+X_j$, conditioning on $X_j$ and using independence gives
\[
\gamma_i=(1-u_j)c+u_jb=c+u_j(b-c).
\]

On the other hand, conditioning on $X_i$ gives
\[
\tau =u_i\mathbb P(S-X_i=N/2-1) +(1-u_i)\mathbb P(S-X_i=N/2).
\]
Since the distribution of $S-X_i$ does not depend on $u_i$, differentiation yields
\[
\frac{\partial\tau}{\partial u_i} =\mathbb P(S-X_i=N/2-1) -\mathbb P(S-X_i=N/2).
\]
Conditioning on $X_j$, we have
\[
\begin{aligned}
\mathbb P(S-X_i=N/2-1)&=(1-u_j)b+u_ja,\\
\mathbb P(S-X_i=N/2)&=(1-u_j)c+u_jb.
\end{aligned}
\]
Consequently,
\[
\frac{\partial\tau}{\partial u_i} =b-c+u_j(a-2b+c).
\]
Interchanging $i$ and $j$ and simplifying, we obtain
\[
\begin{aligned}
\frac{\partial\tau}{\partial u_i}\gamma_j -\frac{\partial\tau}{\partial u_j}\gamma_i &= (u_i-u_j) \left((b-c)^2-c(a-2b+c)\right)\\
&=(u_i-u_j)(b^2-ac).
\end{aligned}
\]
By Lemma~\ref{lem05}, $b^2\ge ac$.
Moreover, since $u_i$ is strictly increasing in $\lambda_i$ for fixed $x$, the difference $u_i-u_j$ has the same sign as $\lambda_i-\lambda_j$.
Thus, condition \eqref{eq2208sat} implies
\[
(v_i-v_j)(u_i-u_j)\ge 0.
\]
Since $h_i,h_j,s>0$, every summand in \eqref{eq2214sat} is nonnegative, and hence $\dot\tau\ge0$.
The argument also applies when $N=2$, in which case $(a,b,c)=(0,1,0)$.

We next apply \eqref{eq2214sat} to $f=s$.
For each $i$, conditioning on $X_i$ gives
\[
\begin{aligned}
\mathbb E\left[(X_i-u_i)\mathbbm1_{\{S>N/2\}}\right]
&=u_i(1-u_i) [\mathbb P(S-X_i\ge N/2) -\mathbb P(S-X_i>N/2)]\\
&=h_i\gamma_i.
\end{aligned}
\]
Summing over $i$, we obtain
\[
s=\mathbb E\left[ S\mathbbm1_{\{S>N/2\}} \right] -\left(\textstyle\sum_{\ell=1}^N u_\ell \right)w.
\]
Since the distribution of $S-X_i$ does not depend on $u_i$, conditioning on $X_i$ gives
\[
\mathbb E\left[S\mathbbm1_{\{S>N/2\}}\right] =(1-u_i)\mathbb E\left[(S-X_i)\mathbbm1_{\{S-X_i>N/2\}} \right] +u_i\mathbb E\left[(S-X_i+1)\mathbbm1_{\{S-X_i\ge N/2\}} \right].
\]
Differentiating with respect to $u_i$, we obtain
\[
\frac{\partial}{\partial u_i} \mathbb E\left[S\mathbbm1_{\{S>N/2\}}\right] =\mathbb P(S-X_i>N/2) +(N/2+1)\gamma_i.
\]
On the other hand, conditioning on $X_i$ gives
\[
\begin{aligned}
w&=(1-u_i)\mathbb P(S-X_i>N/2)+u_i\mathbb P(S-X_i\ge N/2)\\
&=\mathbb P(S-X_i>N/2)+u_i\gamma_i.
\end{aligned}
\]
Using $\partial_{u_i}w=\gamma_i$, we obtain
\[
\begin{aligned}
\frac{\partial s}{\partial u_i} &=\mathbb P(S-X_i>N/2) +(N/2+1)\gamma_i - w -\left(\textstyle\sum_{\ell=1}^N u_\ell \right)\gamma_i\\
&=\left( N/2+1-\textstyle\sum_{\ell=1}^N u_\ell -u_i\right)\gamma_i.
\end{aligned}
\]
It follows that, for $i\ne j$,
\[
\frac{\partial s}{\partial u_i}\gamma_j -\frac{\partial s}{\partial u_j}\gamma_i =-(u_i-u_j)\gamma_i\gamma_j.
\]
Substituting this identity into \eqref{eq2214sat} gives
\[
\dot s =-\frac1s\sum_{i<j} h_i h_j\gamma_i\gamma_j (v_i-v_j)(u_i-u_j) \le 0.
\]
Finally, since $h_i>0$ and $\gamma_i>0$ for every $i$, and since $u_i-u_j$ has the same sign as $\lambda_i-\lambda_j$, the inequality is strict if \eqref{eq2208sat} is strict for at least one pair.
Hence $\dot s<0$.
\end{proof}

\subsection{Monotonicity of $I_k$}
For $2\le k\le m-1$, define
\[
I_k(\vec\lambda) =\int_{\mathbb R} \tau(x,\vec\lambda)^{k+2} w(x,\vec\lambda)^{m-1-k}\,dx.
\]

We next use Lemma~\ref{lem06} to establish a monotonicity property of $I_k$ that will play a key role in proving Lemma~\ref{lem08}.
\begin{lemma}	\label{lem07}
Suppose $N\ge2$ is even and $2\le k\le m-1$.
Then $I_k\in C^1(\mathbb R^N)$, is invariant under common translations, and satisfies
\begin{equation}	\label{eq2218sat}
\abs{\partial_{\lambda_i} I_k} \le(m+1)I_k,\qquad 1\le i\le N.
\end{equation}
Moreover, if $\vec\lambda,\vec v\in\mathbb R^N$ satisfy \eqref{eq2208sat}, then
\[
\sum_{i=1}^N v_i\partial_{\lambda_i}I_k(\vec\lambda) \ge 0.
\]
The inequality is strict if $(v_i-v_j)(\lambda_i-\lambda_j)>0$ for at least one pair.
\end{lemma}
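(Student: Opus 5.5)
The plan is to prove the three assertions in order: regularity and the derivative bound, then translation invariance, then the directional monotonicity. The last one comes from rewriting $I_k$ as an integral over the level parameter $y=w$ and applying Lemma~\ref{lem06} pointwise in $y$.

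\emph{Regularity and \eqref{eq2218sat}.} Work in the coordinates $u_i=e^{x+\lambda_i}/(1+e^{x+\lambda_i})$, $1\le i\le N$. By \eqref{eq2033sat},
\[
\log\tau=\tfrac N2x+\log H(\vec\lambda)-\sum_{\ell}\log(1+e^{x+\lambda_\ell}),
\]
so $\partial_{\lambda_i}\log\tau=\pi_i-u_i\in(-1,1)$. For $w$, we have $\partial_{\lambda_i}w=h_i\gamma_i$ with $\gamma_i=\mathbb P(S-X_i=N/2)$. Moreover $w=\mathbb P(S-X_i>N/2)+u_i\gamma_i\ge u_i\gamma_i$, which gives $0\le\partial_{\lambda_i}w/w\le 1-u_i<1$. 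Hence the integrand $F=\tau^{k+2}w^{m-1-k}$ satisfies
\[
\abs{\partial_{\lambda_i}F}\le\bigl((k+2)+(m-1-k)\bigr)F=(m+1)F.
\]
Integrating this along segments shows $F(x,\vec\lambda')\le e^{(m+1)\sum_i\abs{\lambda_i'-\lambda_i}}F(x,\vec\lambda)$. So on any compact set of $\vec\lambda$ both $F$ and $\nabla_{\vec\lambda}F$ are dominated by a constant multiple of $F(x,\vec\lambda_0)$. This function is integrable in $x$ by \eqref{eq2302sun} together with $w\le1$. Dominated convergence then gives $I_k\in C^1$, differentiation under the integral sign, and \eqref{eq2218sat}.

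\emph{Translation invariance.} Replacing $\vec\lambda$ by $\vec\lambda+c(1,\ldots,1)$ changes each $u_i$ exactly as the shift $x\mapsto x+c$ does. Since $\tau$ and $w$ depend on $(x,\vec\lambda)$ only through $\vec u$, the substitution $x\mapsto x-c$ in the integral shows that $I_k$ is unchanged.

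\emph{Directional monotonicity.} For fixed $\vec\lambda$, substitute $y=w(x,\vec\lambda)$, so $dy=s\,dx$. This gives
\[
I_k(\vec\lambda+t\vec v)=\int_0^1 y^{m-1-k}\,\frac{\tau^{k+2}}{s}\Big|_{(x(t;y),\,\vec\lambda+t\vec v)}\,dy,
\]
where $x(t;y)$ is the level curve from Lemma~\ref{lem06}. Differentiating at $t=0$ under the integral sign yields
\[
\sum_i v_i\partial_{\lambda_i}I_k=\int_0^1 y^{m-1-k}\Bigl((k+2)\frac{\tau^{k+1}\dot\tau}{s}-\frac{\tau^{k+2}\dot s}{s^2}\Bigr)dy.
\]
By Lemma~\ref{lem06}, $\dot\tau\ge0$ and $\dot s\le0$, so the integrand is nonnegative. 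If \eqref{eq2208sat} is strict for some pair, then $\dot s<0$ for every $y$, and since $\tau>0$ the integral is strictly positive.

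\emph{Main obstacle.} The delicate step is justifying differentiation under the integral in the $y$-form, where $1/s$ and $\dot x$ appear and are less transparent. To avoid this, I would rather identify the $y$-integrand derivative with the $x$-form. The left side is already known to equal $\int\sum_iv_i\partial_{\lambda_i}F\,dx$ by the first step. Writing that integral in $y$ and integrating by parts in $x$ should give the same expression: the $\partial_x$ terms from moving along the level curve cancel, and the boundary terms vanish by \eqref{eq2302sun}. This reduces the justification to the dominated convergence already established.
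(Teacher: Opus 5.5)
Your proposal is correct. The first two parts (the bound $\abs{\partial_{\lambda_i}(\tau^{k+2}w^{m-1-k})}\le(m+1)\tau^{k+2}w^{m-1-k}$ with local domination, and translation invariance) match the paper up to cosmetics. The paper gets $\abs{\partial_{\lambda_i}p}\le p$ for any event probability from $\partial_{\lambda_i}p=\mathbb E[(X_i-u_i)\mathbbm 1_E]$, whereas you compute the log-derivatives of $\tau$ and $w$ directly.

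The monotonicity part is where you genuinely diverge. The paper never differentiates the $y$-integral. It applies Lemma~\ref{lem06} at every point of the ray $\vec\lambda+t\vec v$, $t\ge0$, after checking that \eqref{eq2208sat} propagates along the ray. It concludes that $\tau^{k+2}/s$, evaluated on each level curve, is nondecreasing in $t\ge0$, hence $I_k(\vec\lambda+t\vec v)\ge I_k(\vec\lambda)$, and takes a one-sided derivative using $I_k\in C^1$. Strictness comes from a separate difference-quotient argument restricted to $y\in[1/4,3/4]$.

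Your fallback instead gives an exact formula for the directional derivative, and it does work. With $D=\sum_iv_i\partial_{\lambda_i}$ at fixed $x$ and $G=\tau^{k+2}/s$, and using $\partial_x(Dw)=Ds$, one checks
\[
D\bigl(\tau^{k+2}w^{m-1-k}\bigr)=s\,w^{m-1-k}\Bigl(DG-\frac{Dw}{s}\,\partial_xG\Bigr)+\partial_x\Bigl(\frac{\tau^{k+2}w^{m-1-k}\,Dw}{s}\Bigr).
\]
The bracket is exactly $\dot G$ at $t=0$, since $\dot x=-Dw/s$. Two small points should be made explicit:
\begin{itemize}
\item The boundary term vanishes because $Dw=\sum_iv_ih_i\gamma_i$ and $s=\sum_ih_i\gamma_i$ give $\abs{Dw}\le\max_i\abs{v_i}\,s$, so \eqref{eq2302sun} suffices.
\item The $y$-integral over all of $(0,1)$ converges and equals the derivative. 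Integrate over $[-R,R]$, use the already-justified convergence of the left side, and apply monotone convergence to the first term on the right, which is nonnegative by Lemma~\ref{lem06}.
\end{itemize}

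What your route buys: Lemma~\ref{lem06} is used only at $\vec\lambda$ itself, with no propagation of the ordering condition along the ray, and strictness follows at once from pointwise positivity of $\dot G$. What the paper's route buys: it avoids computing this identity and any boundary analysis, at the cost of the ray argument and the compact-interval limit.
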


\begin{proof}
The integral defining $I_k$ converges by \eqref{eq2302sun}, since $0<w<1$.
A common translation of $\vec\lambda$ can be absorbed by a translation of $x$, so $I_k$ is invariant under common translations.

We first justify differentiation under the integral sign.
For any fixed event $E$ determined by the Bernoulli variables, let $p(\vec u)=\mathbb P(E)$.
Conditioning on $X_i$ gives
\[
p=u_i\mathbb P(E\mid X_i=1) +(1-u_i)\mathbb P(E\mid X_i=0).
\]
Since the conditional probabilities do not depend on $u_i$, and $\partial_{\lambda_i}u_i=u_i(1-u_i)$ with $x$ fixed, differentiation yields
\[
\partial_{\lambda_i}p =u_i(1-u_i)\left[\mathbb P(E\mid X_i=1)-\mathbb P(E\mid X_i=0)\right].
\]
On the other hand, conditioning on $X_i$ gives
\[
\mathbb E[(X_i-u_i)\mathbbm{1}_E] =u_i(1-u_i)\mathbb P(E\mid X_i=1) -u_i(1-u_i)\mathbb P(E\mid X_i=0).
\]
Consequently,
\[
\partial_{\lambda_i}p =\mathbb E[(X_i-u_i)\mathbbm{1}_E].
\]
Since $\abs{X_i-u_i} \le 1$, it follows that
\[
\abs{\partial_{\lambda_i}p} \le p.
\]
Applying this to $w$ and $\tau$ and using the product rule, we obtain
\begin{equation}	\label{eq1504mon}
\Abs{\partial_{\lambda_i} \left(\tau^{k+2}w^{m-1-k}\right)} \le(m+1)\tau^{k+2}w^{m-1-k}.
\end{equation}
Since $\tau, w>0$, this implies
\[
\Abs{\partial_{\lambda_i}\log\left(\tau^{k+2}w^{m-1-k}\right)} \le m+1.
\]
Integrating along coordinate directions and exponentiating, we obtain
\[
\tau(x,\vec\lambda')^{k+2} w(x,\vec\lambda')^{m-1-k} \le e^{(m+1)\sum_{\ell=1}^N \abs{\lambda_\ell'-\lambda_\ell}}\, \tau(x,\vec\lambda)^{k+2} w(x,\vec\lambda)^{m-1-k}.
\]
Combining this estimate with \eqref{eq1504mon}, we see that, in a neighborhood of any fixed $\vec\lambda$, the integrand and its first derivatives are bounded by a common integrable function
of $x$.
Differentiation under the integral sign is therefore justified.
This proves that $I_k\in C^1(\mathbb R^N)$ and establishes \eqref{eq2218sat}.

Now fix $\vec\lambda$ and $\vec v$ satisfying \eqref{eq2208sat}.
For each $t\in\mathbb R$ and $y\in(0,1)$, let $x=x(t,y)$ be the unique smooth solution of
\[
w(x(t,y),\vec\lambda+t\vec v)=y.
\]
Define
\[
F(t,y) =\frac{\tau(x(t,y),\vec\lambda+t\vec v)^{k+2}}{s(x(t,y),\vec\lambda+t\vec v)}.
\]
Since $s=\partial_x w>0$, the substitution $y=w(x,\vec\lambda+t\vec v)$ gives $dy=s\,dx$, and hence
\[
I_k(\vec\lambda+t\vec v) =\int_0^1 y^{m-1-k}F(t,y)\,dy.
\]

For $t\ge0$, the ordering condition \eqref{eq2208sat} remains valid, since
\[
(v_i-v_j) \left((\lambda_i+tv_i)-(\lambda_j+tv_j)\right) = (v_i-v_j)(\lambda_i-\lambda_j)+t(v_i-v_j)^2 \ge 0.
\]
We may therefore apply Lemma~\ref{lem06} at each $t \ge 0$, taking $\vec\lambda+t\vec v$ as the initial parameter vector.
This gives $\dot\tau\ge 0$ and $\dot s\le 0$ along the curve $t \mapsto (x(t,y),\vec\lambda+t\vec v)$.
Consequently, since $\tau,s>0$,
\[
\partial_t F =\frac{(k+2)\tau^{k+1}\dot\tau}{s} -\frac{\tau^{k+2}\dot s}{s^2} \ge 0.
\]
Thus $F(t,y)\ge F(0,y)$ for every $t \ge 0$ and $y \in (0,1)$.
Integrating against $y^{m-1-k}$ gives
\[
I_k(\vec\lambda+t\vec v) \ge I_k(\vec\lambda),\qquad t \ge 0.
\]
Since $I_k\in C^1(\mathbb R^N)$, taking the right derivative at $t=0$ yields
\[
\sum_{i=1}^N v_i\partial_{\lambda_i}I_k(\vec\lambda)\ge0.
\]

Finally, suppose that \eqref{eq2208sat} is strict for at least one pair.
By Lemma~\ref{lem06}, we have $\dot s<0$ at $t=0$ for every $y\in(0,1)$, and hence
\[
\partial_t F(0,y)>0,\qquad 0<y<1.
\]
Since $F(t,y)\ge F(0,y)$ for $t \ge 0$, we may restrict the integral to $[1/4,3/4]$ to obtain
\[
I_k(\vec\lambda+t\vec v)-I_k(\vec\lambda) \ge \int_{1/4}^{3/4} y^{m-1-k}[F(t,y)-F(0,y)]\,dy.
\]
The function $F$ is smooth in a neighborhood of $\{0\}\times[1/4,3/4]$.
Dividing by $t>0$ and letting $t\downarrow0$, we obtain
\[
\sum_{i=1}^N v_i\partial_{\lambda_i}I_k(\vec\lambda) \ge \int_{1/4}^{3/4} y^{m-1-k}\partial_tF(0,y)\,dy>0.
\]
This completes the proof.
\end{proof}

\subsection{Positivity of $\Delta_k$ by integration by parts}
The key step in proving Theorem~\ref{thm01} is the following strict positivity result.

\begin{lemma}	\label{lem08}
Let $N \ge 2$ be an even integer and $m \ge 3$ an integer.
Then $\Delta_k$ defined in \eqref{eq2205sat} is strictly positive for every integer $2\le k\le m-1$.
\end{lemma}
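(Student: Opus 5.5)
The plan is to rewrite $\Delta_k$ as an integral over the $\lambda$-variables, express the bracket in \eqref{eq2205sat} as a divergence, and integrate by parts so that Lemma~\ref{lem07} supplies the sign.

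\textbf{Step 1: reduction to the $\lambda$-variables.} Pass to the coordinates \eqref{eq2018sat}. By \eqref{eq2034sat}, $d\vec u=\tau^2J\,dx\,d\lambda_1\cdots d\lambda_{N-1}$, and by Lemma~\ref{lem03} the $\pi_i$ do not depend on $x$. Therefore
\[
\Delta_k=\int_{\mathbb R^{N-1}} J(\vec\lambda)\,\Phi(\vec\lambda)\,I_k(\vec\lambda)\,d\lambda_1\cdots d\lambda_{N-1},
\qquad
\Phi=\frac1N\sum_{i=1}^N(1-\pi_i)^k-\frac1{k+1},
\]
where $\vec\lambda=(\lambda_1,\ldots,\lambda_{N-1},0)$. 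This integral converges absolutely, since $\int J I_k\,d\lambda'=\int_{(0,1)^N}\tau^k w^{m-1-k}\,d\vec u\le1$ and $\Phi$ is bounded.

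\textbf{Step 2: $J\Phi$ as a divergence.} For a function $\varphi$ on $(0,1)$, \eqref{eq2035sat} gives
\[
\partial_{\lambda_i}\bigl(J\varphi(\pi_i)\bigr)=J\,\psi'(\pi_i),\qquad \psi(p)=p(1-p)\varphi(p).
\]
We use \eqref{eq1944sat} to add a term $\alpha(\pi_i-\tfrac12)$, which sums to zero over $i$. We then solve
\[
\psi'(p)=\tfrac1N\bigl[(1-p)^k-\tfrac1{k+1}+\alpha(p-\tfrac12)\bigr],\qquad \psi(0)=0.
\]
One checks that $\psi(1)=0$ automatically, and with $q=1-p$,
\[
\varphi(p)=\frac{1}{N(k+1)}\sum_{j=0}^{k-1}q^j-\frac{\alpha}{2N}.
\]
This $\varphi$ is bounded and, for $k\ge2$, strictly decreasing in $p$. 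We take $\alpha=0$; additive constants are irrelevant anyway, by \eqref{eq2213sat}. Thus $J\Phi=\sum_{i=1}^N\partial_{\lambda_i}(J\varphi(\pi_i))$ on $\mathbb R^N$.

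To obtain a genuine divergence on $\mathbb R^{N-1}$, use translation invariance of $J\varphi(\pi_N)$. By \eqref{eq2213sat}, $\partial_{\lambda_N}(J\varphi(\pi_N))=-\sum_{i<N}\partial_{\lambda_i}(J\varphi(\pi_N))$. Hence
\[
J\Phi=\sum_{i<N}\partial_{\lambda_i}\bigl[J(\varphi(\pi_i)-\varphi(\pi_N))\bigr].
\]

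\textbf{Step 3: integration by parts and sign.} Integrate by parts on $\mathbb R^{N-1}$ and then use \eqref{eq2213sat} for $I_k$ (Lemma~\ref{lem07}). This gives
\[
\Delta_k=\int J\sum_{i=1}^N v_i\,\partial_{\lambda_i}I_k\,d\lambda',\qquad v_i=-\varphi(\pi_i).
\]
Since $\varphi$ is strictly decreasing and $\pi_i$ is ordered like $\lambda_i$ by \eqref{eq2121sat}, the vector $\vec v$ is ordered like $\vec\lambda$, so \eqref{eq2208sat} holds. Moreover $(v_i-v_j)(\lambda_i-\lambda_j)>0$ whenever $\lambda_i\ne\lambda_j$, which is every point off a null set. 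Lemma~\ref{lem07} then gives a strictly positive integrand almost everywhere, and since $J>0$ we get $\Delta_k>0$.

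\textbf{Main obstacle: the boundary terms.} I expect the delicate point to be justifying the integration by parts on the unbounded domain $\mathbb R^{N-1}$. I would insert a cutoff $\chi(\lambda'/R)$ and integrate by parts exactly. The error term is bounded by
\[
\frac CR\int J\,\abs{\varphi(\pi_i)-\varphi(\pi_N)}\,I_k\,d\lambda'\le \frac{C'}R\int J I_k\,d\lambda',
\]
which tends to $0$ as $R\to\infty$. The main term converges by dominated convergence, using $\abs{\partial_{\lambda_i} I_k}\le(m+1)I_k$ from \eqref{eq2218sat} together with the boundedness of $\varphi$ and the integrability of $JI_k$ from Step 1.
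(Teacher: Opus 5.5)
Your proposal is correct and follows the paper's proof essentially step for step. With $\alpha=0$, your $\varphi$ is exactly the paper's Stein-equation solution $b$ divided by $N$, and the remaining steps match the paper's argument: the divergence identity on the slice $\lambda_N=0$ via $\varphi(\pi_i)-\varphi(\pi_N)$, the cutoff justification of the integration by parts using $\abs{\partial_{\lambda_i}I_k}\le(m+1)I_k$ and $\int JI_k\le1$, and the appeal to Lemma~\ref{lem07} with $v_i=-\varphi(\pi_i)$.
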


\begin{proof}
We now return to the slice $\lambda_N=0$, identified with $\mathbb R^{N-1}$.
Recall from \eqref{eq2034sat} that
\[
d\vec u=\tau^2J\,dx\,d\lambda_1\cdots d\lambda_{N-1}.
\]
Since $J$ and $\pi_i$ are independent of $x$, the definition of $I_k$ gives
\[
N \Delta_k =\int_{\mathbb R^{N-1}}J I_k \sum_{i=1}^N \left((1-\pi_i)^k-\frac1{k+1}\right)\,d\lambda_1\cdots d\lambda_{N-1}.
\]
Moreover, the same change of variables gives
\begin{equation}	\label{eq2216sat}
\int_{\mathbb R^{N-1}}J I_k\, d\lambda_1\cdots d\lambda_{N-1} = \int_{(0,1)^N} \tau^k w^{m-1-k}\,d\vec u \le 1.
\end{equation}

To obtain the divergence identity \eqref{eq2221sat},
we seek a bounded function $b$ satisfying
\begin{equation}	\label{eq2157tue}
\frac{d}{dz}\left[z(1-z)b(z)\right] =(1-z)^k-\frac1{k+1}.
\end{equation}
This is the Stein equation for the uniform distribution on $(0,1)$ with test function $h(z)=(1-z)^k$; see \cite[(2.3)]{GR2013}.
Integrating from $0$ to $z$ gives
\[
z(1-z)b(z) =\frac{(1-z)-(1-z)^{k+1}}{k+1}.
\]
Thus, we may take
\[
b(z)=\frac1{k+1}\sum_{\ell=0}^{k-1}(1-z)^\ell, \qquad 0\le z \le 1.
\]
Moreover, since $k\ge2$,
\begin{equation}	\label{eq2220sat}
b'(z) =-\frac1{k+1}\sum_{\ell=1}^{k-1} \ell(1-z)^{\ell-1} \le -\frac1{k+1}.
\end{equation}
Using \eqref{eq2035sat} and \eqref{eq2157tue}, we obtain
\begin{equation}	\label{eq0219sat}
\partial_{\lambda_i}\left[Jb(\pi_i)\right]  =J\left((1-\pi_i)^k-\frac1{k+1}\right).
\end{equation}

For $i<N$, set
\[
\beta_i=b(\pi_i)-b(\pi_N).
\]
Since $Jb(\pi_N)$ is invariant under common translations, \eqref{eq2213sat} gives
\[
\sum_{i<N}\partial_{\lambda_i} [Jb(\pi_N)] =-\partial_{\lambda_N}[Jb(\pi_N)].
\]
Consequently, using \eqref{eq0219sat}, we obtain
\begin{equation}	\label{eq2221sat}
\begin{aligned}
\sum_{i<N}\partial_{\lambda_i}(J\beta_i) &=\sum_{i<N}\partial_{\lambda_i}[Jb(\pi_i)] +\partial_{\lambda_N}[Jb(\pi_N)]\\
&=\sum_{i=1}^N\partial_{\lambda_i}[Jb(\pi_i)] =J\sum_{i=1}^N \left((1-\pi_i)^k-\frac1{k+1}\right).
\end{aligned}
\end{equation}
Here, all derivatives are taken in $\mathbb R^N$ before restricting to $\lambda_N=0$.

Combining the integral representation of $\Delta_k$ with \eqref{eq2221sat}, we have
\[
N\Delta_k =\int_{\mathbb R^{N-1}} I_k \sum_{i<N}\partial_{\lambda_i}(J\beta_i)\, d\lambda_1\cdots d\lambda_{N-1}.
\]
Formally integrating by parts yields
\begin{equation}	\label{eq2206sat}
N\Delta_k =-\int_{\mathbb R^{N-1}} J \sum_{i<N}\beta_i\partial_{\lambda_i}I_k\, d\lambda_1\cdots d\lambda_{N-1}.
\end{equation}
We now justify this integration by parts using a cutoff argument.

Choose $\chi\in C_c^\infty(\mathbb R^{N-1})$ such that $0\le\chi\le1$ and $\chi=1$ near the origin, and set
\[
\chi_\varepsilon(\lambda_1,\ldots,\lambda_{N-1}) =\chi(\varepsilon\lambda_1,\ldots,\varepsilon\lambda_{N-1}).
\]
Multiplying \eqref{eq2221sat} by $\chi_\varepsilon I_k$ and integrating by parts gives
\begin{multline*}
\int_{\mathbb R^{N-1}}\chi_\varepsilon JI_k \sum_{i=1}^N\left((1-\pi_i)^k-\frac1{k+1}\right) \,d\lambda_1\cdots d\lambda_{N-1}\\
=-\int_{\mathbb R^{N-1}}\chi_\varepsilon J \sum_{i<N}\beta_i\partial_{\lambda_i}I_k \,d\lambda_1\cdots d\lambda_{N-1}
-\int_{\mathbb R^{N-1}}JI_k \sum_{i<N}\beta_i\partial_{\lambda_i}\chi_\varepsilon \,d\lambda_1\cdots d\lambda_{N-1}.
\end{multline*}
Since $\abs{\beta_i} \le 1$ and $\abs{\partial_{\lambda_i}I_k} \le (m+1)I_k$ by Lemma~\ref{lem07}, the integrands on the left
and in the first integral on the right are bounded in absolute value by $NJI_k$ and $(N-1)(m+1)JI_k$, respectively.
These functions are integrable by \eqref{eq2216sat}.
Since $\chi_\varepsilon\to1$ pointwise, dominated convergence applies to both integrals.

For the last integral, we use
\[
\abs{\partial_{\lambda_i}\chi_\varepsilon} \le \varepsilon \norm{\nabla\chi}_\infty.
\]
Its absolute value is therefore bounded by
\[
(N-1)\varepsilon\norm{\nabla\chi}_\infty \int_{\mathbb R^{N-1}}JI_k\, d\lambda_1\cdots d\lambda_{N-1},
\]
which tends to zero by \eqref{eq2216sat}.
This proves \eqref{eq2206sat}.

We now verify that
\[
-\sum_{i<N}\beta_i\partial_{\lambda_i}I_k>0 \quad\text{whenever }\;\vec\lambda\ne\vec 0.
\]
For each $\vec\lambda$, define
\[
v_i=-b(\pi_i),\qquad 1\le i\le N.
\]
Since $I_k$ is invariant under common translations, \eqref{eq2213sat} gives $\sum_{i=1}^N\partial_{\lambda_i}I_k=0$.
Hence
\[
\begin{aligned}
-\sum_{i<N}\beta_i\partial_{\lambda_i}I_k &=-\sum_{i<N}b(\pi_i)\partial_{\lambda_i}I_k -b(\pi_N)\partial_{\lambda_N}I_k\\
&=-\sum_{i=1}^N b(\pi_i)\partial_{\lambda_i}I_k =\sum_{i=1}^N v_i\partial_{\lambda_i}I_k.
\end{aligned}
\]

By \eqref{eq2121sat}, the ordering of the $\pi_i$ agrees with that of the $\lambda_i$.
Since $b$ is strictly decreasing by \eqref{eq2220sat}, the ordering of the $v_i=-b(\pi_i)$ also agrees
with that of the $\lambda_i$.
Consequently,
\[
(v_i-v_j)(\lambda_i-\lambda_j)\ge0 \quad\text{for all }\; i,j,
\]
with strict inequality whenever $\lambda_i \ne \lambda_j$.

On the slice $\lambda_N=0$, if $\vec\lambda\ne\vec 0$, then $\lambda_i\ne\lambda_N$ for some $i<N$.
Applying Lemma~\ref{lem07} at each such point, with $\vec v$ held fixed, yields
\[
-\sum_{i<N}\beta_i\partial_{\lambda_i}I_k =\sum_{i=1}^N v_i\partial_{\lambda_i}I_k>0.
\]
At $\vec\lambda=\vec 0$, all $\pi_i$ are equal, so $\beta_i=0$ for every $i<N$.

Since $J>0$ and the origin has measure zero in $\mathbb R^{N-1}$, \eqref{eq2206sat} yields $\Delta_k>0$.
\end{proof}

\begin{proof}[Proof of Theorem~\ref{thm01}]
Since $N=n+1$ is even, Lemma~\ref{lem08} shows that every term in the sum \eqref{eq2202sat} is strictly positive.
Moreover, the sum is nonempty since $m \ge 3$.
Consequently,
\[
C(m,n)>C(m,n+2),
\]
as desired.
\end{proof}

\section*{Use of AI tools}
Large language models (LLMs) were used to explore proof ideas and assist with computational checks, literature searches, and language polishing.

The authors wrote the manuscript, verified the proofs, and take full responsibility for the content of this paper.


\begin{thebibliography}{99}

\bibitem{Black1948}
Black, Duncan.
\textit{On the rationale of group decision-making}.
Journal of Political Economy \textbf{56} (1948), no.~1, 23--34.
http://www.jstor.org/stable/1825026.

\bibitem{CDNPR2025}
Caizergues, Emma; Durand, Fran\c{c}ois; Noy, Marc; de Panafieu, \'Elie; Ravelomanana, Vlady.
\textit{Probability of a Condorcet winner for large electorates: An analytic combinatorics approach}.
arXiv:2505.06028 [cs.GT]
https://doi.org/10.48550/arXiv.2505.06028

\bibitem{Chen2000}
Chen, Sean X.
\textit{General properties and estimation of conditional Bernoulli models}.
J. Multivariate Anal. \textbf{74} (2000), no. 1, 69--87.
https://doi.org/10.1006/jmva.1999.1872

\bibitem{CL1997}
Chen, Sean X.; Liu, Jun S.
\textit{Statistical applications of the Poisson-Binomial and conditional Bernoulli distributions}.
Statist. Sinica \textbf{7} (1997), no. 4, 875--892.
http://www.jstor.org/stable/24306160

\bibitem{DP1970}
DeMeyer, Frank; Plott, Charles R.
\textit{The probability of a cyclical majority}.
Econometrica \textbf{38} (1970), no. 2, 345--354.
https://doi.org/10.2307/1913015

\bibitem{Fishburn1973}
Fishburn, Peter C.
\textit{A proof of May's theorem $p(m,4)=2p(m,3)$}.
Behavioral Science \textbf{18} (1973), no. 3, 212.
https://doi.org/10.1002/bs.3830180309

\bibitem{FGM1979}
Fishburn, Peter C.; Gehrlein, William V.; Maskin, Eric.
\textit{Condorcet proportions and Kelly's conjectures}.
Discrete Appl. Math. \textbf{1} (1979), no. 4, 229--252.
https://doi.org/10.1016/0166-218X(79)90001-5

\bibitem{Gehrlein2006}
Gehrlein, William V.
\textit{Condorcet's paradox}.
Theory and Decision Library C, \textbf{40}. Springer, Berlin--Heidelberg, 2006.
https://doi.org/10.1007/3-540-33799-7

\bibitem{GF1976}
Gehrlein, William V.; Fishburn, Peter C.
\textit{The probability of the paradox of voting: A computable solution}.
Journal of Economic Theory \textbf{13} (1976), no. 1, 14--25.
https://doi.org/10.1016/0022-0531(76)90063-6

\bibitem{GR2013}
Goldstein, Larry; Reinert, Gesine.
\textit{Stein's method for the beta distribution and the P\'olya-Eggenberger urn}.
J. Appl. Probab. \textbf{50} (2013), no. 4, 1187--1205.
https://doi.org/10.1239/jap/1389370107

\bibitem{HLP1952}
Hardy, G. H.; Littlewood, J. E.; P\'olya, G.
\textit{Inequalities}.
Reprint of the 1952 edition.
Cambridge Math. Lib.
Cambridge University Press, Cambridge, 1988. xii+324 pp.

\bibitem{Kelly1974}
Kelly, Jerry S.
\textit{Voting anomalies, the number of voters, and the number of alternatives}.
Econometrica \textbf{42} (1974), no. 2, 239--251.
https://doi.org/10.2307/1911974

\bibitem{Kelly1986}
Kelly, Jerry S.
\textit{Condorcet winner proportions}.
Social Choice and Welfare \textbf{3} (1986), no. 4, 311--314.
http://www.jstor.org/stable/41105845

\bibitem{KR2005}
Krishnamoorthy, M. S.; Raghavachari, M.
\textit{Condorcet winner probabilities - A statistical perspective}.
arXiv:math/0511140 [math.ST]
https://doi.org/10.48550/arXiv.math/0511140

\bibitem{MB2002}
Maassen, Hans; Bezembinder, Thom.
\textit{Generating random weak orders and the probability of a Condorcet winner}.
Social Choice and Welfare \textbf{19} (2002), no. 3, 517--532.
https://doi.org/10.1007/s003550100129

\bibitem{May1971}
May, Robert M.
\textit{Some mathematical remarks on the paradox of voting}.
Behavioral Science \textbf{16} (1971), no. 2, 143--151.
https://doi.org/10.1002/bs.3830160204

\bibitem{Pittel2025}
Pittel, Boris.
\textit{On likelihood of a Condorcet winner for uniformly random and independent voter preferences}.
arXiv:2506.20613 [math.CO]
https://doi.org/10.48550/arXiv.2506.20613

\bibitem{Sauermann2022}
Sauermann, Lisa.
\textit{On the probability of a Condorcet winner among a large number of alternatives}.
arXiv:2203.13713 [econ.TH]
https://doi.org/10.48550/arXiv.2203.13713

\end{thebibliography}
\end{document}